\numberwithin{equation}{section}
\theoremstyle{plain}
\newtheorem{theorem}{Theorem}[section]
\newtheorem{lemma}[theorem]{Lemma}
\newtheorem{corollary}[theorem]{Corollary}
\theoremstyle{definition}
\newtheorem{example}[theorem]{Example}
\theoremstyle{remark}
\newcommand{\FF}{\mathbb{F}}
\newcommand{\MM}{\mathbb{M}}
\newcommand{\PP}{\mathbb{P}}
\newcommand{\QQ}{\mathbb{Q}}
\newcommand{\RR}{\mathbb{R}}
\newcommand{\BBB}{\mathcal{B}}
\newcommand{\FFF}{\mathcal{F}}
\newcommand{\LLL}{\mathcal{L}}
\newcommand{\NNN}{\mathcal{N}}
\newcommand{\PPP}{\mathcal{P}}
\newcommand{\dv}{\,\mathrm{d}}                              % Integrator
\newcommand{\eps}{\varepsilon}                              % Epsilon
\newcommand{\foralls}{\;\forall\;}                          % For all
\newcommand{\cov}{\mathrm{Cov}}                             % Covariance
\begin{document}

\begin{frontmatter}
\title{Quantifying identifiability in independent component analysis}
\runtitle{Quantifying identifiability in ICA}

\begin{aug}

\author{\fnms{Alexander} \snm{Sokol}*\ead[label=e1]{alexander@math.ku.dk}}

\address{Institute of Mathematics \\
University of Copenhagen \\
2100 Copenhagen, Denmark \\
\printead{e1}}

\author{\fnms{Marloes} \snm{H. Maathuis}\ead[label=e2]{maathuis@stat.math.ethz.ch}}

\address{Seminar f\"{u}r Statistik \\
ETH Z\"{u}rich \\
8092 Z\"{u}rich, Switzerland\\
\printead{e2}}

\author{\fnms{Benjamin} \snm{Falkeborg} \ead[label=e3]{benjamin.falkeborg@econ.ku.dk}}

\address{Department of Economics \\
University of Copenhagen\\
2100 Copenhagen, Denmark\\
\printead{e3}}

\runauthor{A. Sokol et al.}

\end{aug}

\begin{abstract}
We are interested in consistent estimation of the mixing matrix in the ICA
model, when the error distribution is close to (but different from)
Gaussian. In particular, we consider $n$ independent samples from the ICA
model $X = A\epsilon$, where we assume that the coordinates of $\epsilon$
are independent and identically distributed according to a contaminated Gaussian distribution, and the amount of contamination is allowed
to depend on $n$. We then investigate how the ability to consistently estimate the mixing matrix
depends on the amount of contamination. Our results suggest that in an asymptotic sense, if the amount of contamination
decreases at rate $1/\sqrt{n}$ or faster, then the mixing matrix is only identifiable up to transpose products. These results also have
implications for causal inference from linear structural equation
models with near-Gaussian additive noise.
\end{abstract}

\begin{keyword}[class=MSC]
\kwd[Primary ]{62F12}
\kwd[; secondary ]{62F35}
\end{keyword}

\begin{keyword}
\kwd{Independent Component Analysis}
\kwd{LiNGAM}
\kwd{Identifiability}
\kwd{Kolmogorov norm}
\kwd{Contaminated distribution}
\kwd{Asymptotic statistics}
\kwd{Empirical process}
\kwd{Linear structural equation model}
\end{keyword}

%\tableofcontents

\end{frontmatter}

\section{Introduction}

We consider the $p$-dimensional independent component analysis (ICA) model 
\begin{align}
  X &= A\epsilon,\label{eq:SEM}
\end{align}
where $A$ is a $p\times p$ mixing matrix, $\epsilon$ is a $p$-dimensional error (or source)
variable with independent and nondegenerate coordinates of mean zero, and $X$ is a $p$-dimensional observational variable.
Based on observations of $X$, ICA aims to estimate the mixing matrix $A$
and the distribution of the error variable $\epsilon$. Theory and
algorithms for ICA can be found in, e.g.,
\cite{MR2329469,PC1994,AH1999,AH2013,HKO2001,NA2008,YuanSamworth2012}. ICA has applications
in many different disciplines, including blind source separation (e.g.,
\cite{CJ2010}), face recognition (e.g., \cite{MSB2002}), medical imaging
(e.g., \cite{CFB2004, JEA2001, VSJHO2000}) and causal discovery using the
LiNGAM method (e.g., \cite{MR2274431,MR2804599}).  

Our focus is on consistent estimation of the mixing matrix. Here, identifiability is an issue,
since two different mixing matrices $A$ and $B$ may yield the same
distribution of $X$. This is the case, for example, if the distribution of $\epsilon$ is
multivariate Gaussian and $AA^t = BB^t$. In this case, the mixing matrix cannot be identified
from $X$, instead only the transpose product of the mixing matrix can be identified. In \cite{PC1994}, it was shown that whenever at most one of the
components of $\epsilon$ is Gaussian, the mixing matrix is identifiable up to
scaling and permutation of columns. This result was expanded upon in the ICA context in Theorem 4 of \cite{EK2004} and
extended considerably to the broader class of additive index models in Theorem 1 of \cite{Yuan2011}. In order to illustrate
the relevance of the mixing matrix in (\ref{eq:SEM}), we give an example based on causal inference.

\begin{example}
\label{example:ICACausal}
Consider a two-dimensional linear structural equation model with
additive noise of the form
\begin{align}
  \left(\begin{array}{c}
    X_1 \\ X_2 \end{array}\right)
  &=\left(\begin{array}{cc}
    C_{11} & C_{12} \\ C_{21} & C_{22} \end{array}\right)
    \left(\begin{array}{c}  X_1 \\ X_2 \end{array}\right)
  +
  \left(\begin{array}{c}
    \epsilon_1 \\ \epsilon_2 \end{array}\right)\label{eq:TwoDimASem},
\end{align}
see, e.g., \cite{MR2274431}. We assume that the coordinates of $\epsilon$
are independent, nondegenerate and have mean zero, and are independent of $X$. We also assume that
$C$ is strictly triangular, meaning that all entries of $C$ are zero
except either $C_{12}$ or $C_{21}$. In the context of linear structural
equation models,  identifying $C$ corresponds to identifying whether $X_1$ is a cause of $X_2$ (corresponding to $C_{21}\neq0$), $X_2$ is
a cause of $X_1$ (corresponding to $C_{12}\neq0$), or neither is a cause of the other (corresponding to $C_{21} = C_{12} = 0$).

As $C$ is strictly triangular, $I-C$ is invertible. Letting
$A=(I-C)^{-1}$, we obtain
\begin{align}
  \left(\begin{array}{c}
    X_1 \\ X_2 \end{array}\right)
  &=\left(\begin{array}{cc}
    A_{11} & A_{12} \\ A_{21} & A_{22} \end{array}\right)
  \left(\begin{array}{c}
    \epsilon_1 \\ \epsilon_2 \end{array}\right)\label{eq:TwoDimAICA},
\end{align}
where $A$ is upper or lower triangular according to whether the same
holds for $C$. Thus, we have arrived at an ICA model of the form
(\ref{eq:SEM}). In the case where $\epsilon$ is jointly Gaussian, it is immediate that we cannot
identify $A$ from the distribution of $X$ alone. By the results of \cite{PC1994,MR2274431},
identification of $A$ up to scaling and permutation of columns from the
distribution of $X$ is possible when $\epsilon$ has at most one Gaussian coordinate. In this case, we may
therefore infer causal relationships from estimation of the mixing matrix in an
ICA model. The LiNGAM method \cite{MR2274431} is based on this idea. However, if we only get $n$ i.i.d.\ samples from $X$
and the distribution of $\epsilon$ is close to Gaussian, it may be expected that estimation of $A$ becomes difficult,
and consequently causal inference is difficult as well.
\hfill$\circ$
\end{example}

Motivated by the above, we take an interest in the following question: When the distribution of $\epsilon$ is
close to Gaussian but non-Gaussian, how difficult is it to consistently estimate the mixing matrix as the number $n$ of observations
increases? For any fixed non-Gaussian $\epsilon$, the results of \cite{PC1994} show that $A$ is identifiable up to
scaling and permutation of columns, so letting $n$ tend to infinity in this model, we should be able to
consistently estimate $A$ up to such scaling and permutation of columns. However, if the distribution of $\epsilon$ converges to a Gaussian distribution, it is natural
to expect that the model begins to take on the properties of ICA models with Gaussian errors, where only the transpose
product $AA^t$ is identifiable. Our question, heuristically speaking, is: How large should the number of observations $n$
be in order to counterbalance the near-Gaussianity of the noise distribution? We think of this as quantifying the level
of identifiability of $A$. In order to elucidate this issue,
we will consider asymptotic scenarios where the distribution of $\epsilon$ depends on the sample size $n$, and tends to a
Gaussian distribution as $n$ tends to infinity.

\section{Problem statement and main results}

\label{sec:ProbState}

ICA can be used to estimate $A$
when the distribution of $\eps$ is unknown. In this case, we may think
of the statistical model (\ref{eq:SEM}) as the collection of
probability measures
\begin{align}
  \{L_A(R)\mid A\in\MM(p,p),R\in\PPP(p)\},\label{eq:ICASemiparamModel}
\end{align}
where $\MM(p,p)$ denotes the space of $p\times p$ matrices, $L_A:\RR^p\to\RR^p$ is given
by $L_A(x)=Ax$, $L_A(R)$ denotes the image measure of $R$ under the
transformation $L_A$, $\PPP(p)$ denotes the set of product probability
measures on $(\RR^p,\BBB_p)$ with nondegenerate mean zero coordinates and $\BBB_p$ denotes the Borel $\sigma$-algebra on
$\RR^p$. With $\eps$ having distribution $R$,
this means that the error distribution has independent nondegenerate
mean zero coordinates. In other words, it is assumed that the distribution
of $X$ in (\ref{eq:SEM}) is equal to $L_A(R)$ for some $A\in\MM(p,p)$
and $R\in\PPP(p)$. This is a semiparametric model, where $A$ is the parameter of interest and 
$R$ is a nuisance parameter. Asymptotic distributions of estimates
of the mixing matrix in this type of set-up are derived in, e.g., \cite{AC1997, MR2329469, MR2906874}. 
The difficulty of estimating $A$ can then be appraised by considering for example the asymptotic variance of the
estimates. 

Alternatively, one can consider estimation of $A$ for a
given error distribution. This is the approach we take in this paper. When
$\eps$ has the distribution of some fixed $R\in\PPP(p)$, the statistical model (\ref{eq:SEM}) is
the collection of probability measures
\begin{align}
  \{L_A(R)\mid A\in\MM(p,p)\}.\label{eq:ICAModel}
\end{align}
Results on identifiability of $A$ in (\ref{eq:ICAModel}) follow from
the results of \cite{PC1994} and \cite{EK2004}. In particular, if no two coordinates of $R$ are jointly 
Gaussian, the mixing matrix $A$ is identifiable up to
sign reversion and permutation of columns, in the sense that $L_A(R)=L_B(R)$ implies
$A=B\Lambda P$ for some diagonal matrix $\Lambda$ with $\Lambda^2=I$
and some permutation matrix $P$.

We are interested in how difficult it is to consistently estimate the mixing matrix in
(\ref{eq:ICAModel}) when the error distributions are different from
  Gaussian but close to Gaussian. Some results in this direction can be
found in \cite{OKK2008}, where the authors calculated the Cr{\'a}mer-Rao lower bound for
the model (\ref{eq:ICAModel}), under the assumption that the coordinates of
the error distribution have certain regularity criteria such as finite
variance and differentiable Lebesgue densities. These results indicate how the minimum variance of an unbiased
estimator of the mixing matrix depends on the error distribution.

We consider the problem from the following different perspective. For
$p\ge1$ and any signed measure $\mu$ on $(\RR,\BBB)$, let $\mu \otimes \mu$ denote the product measure of $\mu$ with itself, and let
$\mu^{\otimes p} = \otimes_{i=1}^p \mu$ denote the $p$-fold product
measure. Fix two nondegenerate mean zero probability measures $\xi$ and $\zeta$ with $\xi\neq\zeta$,
and let $P_e(\beta)$ be the contaminated distribution given by
\begin{align}
  P_e(\beta) = \beta \xi + (1-\beta)\zeta.
\end{align}
We write $F^A_\beta$ for the cumulative
distribution function of $L_A(P_e(\beta)^{\otimes p})$, and write
$F^A=F^A_0$. Note that $F^A$ is then the cumulative distribution function of
$L_A(\zeta^{\otimes p})$. We assume that we observe
$n$ i.i.d.\ observations from the distribution $F_{\beta_n}^A$, where the amount of contamination $\beta_n$ is allowed
to depend on the sample size. Our results indicate that, in this framework, consistent estimation of $A$
(up to scaling and permutation of columns) cannot be expected when $\beta_n = o(1/\sqrt{n})$ and $\zeta$ is mean zero Gaussian.

\section{An upper asymptotic distance bound}

\label{sec:UpperBound}

In this section, we develop some preliminary results which will be used to prove our main results in
Section \ref{sec:Identifiability}. We begin by introducing some notation. For any measure $\mu$
on $(\RR^p,\BBB_p)$, let $|\mu|$ denote the total
variation measure of $\mu$, see, e.g., \cite{MR924157}. We define two norms by
\begin{align}
  \|\mu\|_\infty &=\sup_{x\in\RR^p}|\mu((-\infty,x_1]\times\cdots\times(-\infty,x_p])|,\\
  \|\mu\|_{tv} &= |\mu|(\RR^p),
\end{align}
and refer to these as the uniform and the total variation norms,
respectively. The uniform norm for measures is also known as the
Kolmogorov norm. Note that if $P$ and $Q$ are two probability measures on
$(\RR^p,\BBB_p)$ with cumulative distribution functions $F$ and $G$,
then $\|P-Q\|_\infty=\|F-G\|_\infty$. Finally, we use the notation
$f(s) \sim g(s)$ as $s\to s_0$ when $\lim_{s\to s_0} f(s)/g(s) =
1$.

As in the previous section, let $\xi$ and $\zeta$ be two nondegenerate mean zero probability
distributions on $(\RR,\BBB)$ with $\xi\neq\zeta$. We aim to bound the
distance
\begin{align}
  \|F_{\beta}^A - F_{\beta}^B\|_{\infty} = \|L_{A}(P_e(\beta)^{\otimes p})-L_{B}(P_e(\beta)^{\otimes p})\|_{\infty}  
\end{align}
for matrices $A,B\in\MM(p,p)$ with $F^A=F^B$. To this end, define
\begin{align}
  \nu&=(\xi-\zeta)/\|\xi-\zeta\|_\infty.
\end{align}
The following theorem is a first step towards our goal.

%This limit result may break down when P_e is not a contaminated
%distribution: In this case, the measures in the higher-order terms are not
%constant, and therefore must be bounded otherwise. This bound depends on
%both the 1-dimensional norm and the p-dimensional norm, and may not be easy to obtain. It
%is easily obtained, however, in the case of total variation norms. For other norms, it
%is unclear whether a bound holds.

%Note that the theorem is similar to a result on the directional
%differentiability of \zeta\mapsto L_A(\zeta^{\otimes p}). However, for an
%actual proof of directional differentiability, arguments \zeta+\beta \xi
%instead of (1-\beta)\zeta + \beta \xi should be considered.
\begin{theorem}
\label{theorem:GrossErrorAsymptotics}
Let $\beta\in (0,1)$, and let
$A\in\MM(p,p)$. Then
\begin{align}
  \lim_{\beta\to0}
   \frac{L_{A}(P_e(\beta)^{\otimes p})-L_{A}(\zeta^{\otimes p})}{\|P_e(\beta)-\zeta\|_\infty}
   =\sum_{k=1}^p L_{A}(\zeta^{\otimes(k-1)}\otimes\nu\otimes\zeta^{\otimes(p-k)})\label{eq:GrossErrorLimit},
\end{align}
where convergence is in
$\|\cdot\|_\infty$. Moreover, $F^A_\beta$ tends uniformly to $F^A$ as
$\beta$ tends to zero.
\end{theorem}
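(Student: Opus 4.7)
The plan is to reduce the assertion to a telescoping identity for tensor products of signed measures, and then to control the resulting error terms by passing from the uniform norm (in which the conclusion is phrased) to the total variation norm $\|\cdot\|_{tv}$, which behaves well under both linear pushforwards and tensor products.

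First I would observe that $P_e(\beta)-\zeta=\beta(\xi-\zeta)$, so $\|P_e(\beta)-\zeta\|_\infty=\beta\|\xi-\zeta\|_\infty$ and
\begin{equation*}
(P_e(\beta)-\zeta)/\|P_e(\beta)-\zeta\|_\infty \;=\; \nu \quad\text{for every } \beta\in(0,1).
\end{equation*}
Then I would apply the multilinear telescoping identity
\begin{equation*}
P_e(\beta)^{\otimes p} - \zeta^{\otimes p} \;=\; \sum_{k=1}^{p} P_e(\beta)^{\otimes(k-1)}\otimes (P_e(\beta)-\zeta)\otimes \zeta^{\otimes(p-k)},
\end{equation*}
divide through by $\|P_e(\beta)-\zeta\|_\infty$, and push forward by $L_A$, using linearity of image measures on signed measures. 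This reduces (\ref{eq:GrossErrorLimit}) to showing that
\begin{equation*}
\bigl\| L_A\bigl([P_e(\beta)^{\otimes(k-1)}-\zeta^{\otimes(k-1)}]\otimes \nu\otimes \zeta^{\otimes(p-k)}\bigr)\bigr\|_\infty \;\longrightarrow\; 0
\end{equation*}
for each $k\in\{1,\ldots,p\}$ as $\beta\to 0$.

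To establish this I would use two facts valid for arbitrary finite signed measures: first, $\|L_A(\mu)\|_\infty \le \|L_A(\mu)\|_{tv} \le \|\mu\|_{tv}$, where the second inequality comes from the pointwise bound $|L_A(\mu)|\le L_A(|\mu|)$ (a consequence of the Jordan decomposition, valid even when $A$ is singular); and second, the product formula $\|\mu\otimes\sigma\|_{tv}=\|\mu\|_{tv}\|\sigma\|_{tv}$. Together these collapse the display above to the statement $\|P_e(\beta)^{\otimes(k-1)}-\zeta^{\otimes(k-1)}\|_{tv}\to 0$, which follows from a second application of the telescoping identity, each summand being bounded by $\|P_e(\beta)-\zeta\|_{tv}=\beta\|\xi-\zeta\|_{tv}\to 0$.

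The closing statement that $F^A_\beta\to F^A$ uniformly then comes for free: (\ref{eq:GrossErrorLimit}) exhibits $L_A(P_e(\beta)^{\otimes p})-L_A(\zeta^{\otimes p})$ as the product of an object converging in $\|\cdot\|_\infty$ to a finite signed measure and a scalar $\|P_e(\beta)-\zeta\|_\infty=\beta\|\xi-\zeta\|_\infty\to 0$. I do not expect substantive obstacles in this argument; the only slightly delicate bookkeeping point is making sure the uniform norm is controlled through pushforwards by $L_A$, which is precisely what the estimate $\|L_A(\mu)\|_\infty\le\|\mu\|_{tv}$ buys us.
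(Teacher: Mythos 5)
Your argument is correct, and it takes a genuinely different route from the paper. The paper expands $P_e(\beta)^{\otimes p}=(\zeta+\beta\|\xi-\zeta\|_\infty\nu)^{\otimes p}$ as an exact polynomial $\sum_{k=0}^p\beta^k\|\xi-\zeta\|_\infty^k\,\Gamma_k$ in $\beta$ (with $\Gamma_k$ the sum over all placements of $k$ copies of $\nu$ among $p$ slots), so that the quotient in (\ref{eq:GrossErrorLimit}) equals $\Gamma_1$ plus terms whose coefficients are $O(\beta)$; the limit is then read off as the first-order coefficient. You instead use the telescoping identity $P_e(\beta)^{\otimes p}-\zeta^{\otimes p}=\sum_{k=1}^p P_e(\beta)^{\otimes(k-1)}\otimes(P_e(\beta)-\zeta)\otimes\zeta^{\otimes(p-k)}$ and control the remainder $[P_e(\beta)^{\otimes(k-1)}-\zeta^{\otimes(k-1)}]\otimes\nu\otimes\zeta^{\otimes(p-k)}$ via $\|L_A(\mu)\|_\infty\le\|\mu\|_{tv}$ and multiplicativity of $\|\cdot\|_{tv}$ under tensor products; all of these estimates are valid (including for singular $A$), and the second telescoping step closes the argument. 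Your route is slightly more self-contained in that it makes the boundedness of the error terms explicit (the paper implicitly uses that each $\Gamma_k$ has finite uniform norm), and it avoids the combinatorics of the multinomial expansion; what the paper's expansion buys in exchange is the explicit higher-order coefficients $\Gamma_k$, which are reused verbatim in the proof of Theorem \ref{theorem:AsymptoticIdentifiabilitySlowBeta}, where the sign and magnitude of the $k=1$ term must be isolated against the $k\ge2$ terms. Your derivation of the uniform convergence $F^A_\beta\to F^A$ from the main limit matches the paper's.
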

%Note that as the limit is in fact a limit and not an estimate, the linear
%rate of convergence as a function of the distance of P_e(\beta) to \NNN is the optimal
%rate. For approaches to normality not proceeding through contaminated
%distributions, the rate may be different.

The proof of Theorem \ref{theorem:GrossErrorAsymptotics} exploits
properties of the contaminated distributions $P_e(\beta)$ for $\beta\in(0,1)$, in
particular the fact that $\|P_e(\beta)-\zeta\|_\infty$ is nonzero and linear in
$\beta$ and that $(P_e(\beta)-\zeta)/\|P_e(\beta)-\zeta\|_\infty$ is
constant in $\beta$. These properties are used to obtain a decomposition of $P_e(\beta)^{\otimes p}$ as a
polynomial function of $\beta$ in the proof. As Lemma \ref{lemma:OnlyContaminatedProps} shows,
only contaminated distributions have these properties. This is our
main reason for working with this family of distributions. 

\begin{lemma}
\label{lemma:OnlyContaminatedProps}
Let $\beta\mapsto Q(\beta)$ be a mapping from $(0,1)$ to the space of
probability measures on $(\RR,\BBB)$ with the properties that
$\|Q(\beta)-\zeta\|_\infty$ is nonzero and linear in $\beta$ and
$(Q(\beta)-\zeta)/\|Q(\beta)-\zeta\|_\infty$ is constant in
$\beta$. Then $Q(\beta)$ can be written as a contaminated
$\zeta$ distribution, in the sense that $Q(\beta)=\beta\xi+(1-\beta)\zeta$ for
some probability measure $\xi$ on $(\RR,\BBB)$.
\end{lemma}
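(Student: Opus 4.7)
The plan is to exploit the two hypotheses to force $Q(\beta)$ into a specific functional form, and then simply read off $\xi$ from that form. Set $c(\beta) = \|Q(\beta) - \zeta\|_\infty$ and $\nu = (Q(\beta) - \zeta)/c(\beta)$. The constancy hypothesis says $\nu$ is a single fixed signed measure with $\|\nu\|_\infty = 1$, while the linearity and nonzero hypotheses (linear in the sense of $c(\alpha\beta_1 + (1-\alpha)\beta_2) = \alpha c(\beta_1) + (1-\alpha)c(\beta_2)$, so passing through the origin when extended) give $c(\beta) = a\beta$ for some $a > 0$. Hence
\begin{align*}
  Q(\beta) = \zeta + a\beta\,\nu
\end{align*}
for every $\beta \in (0,1)$.

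Next I would extract two algebraic constraints from the fact that $Q(\beta)$ is a probability measure. The total mass condition $1 = Q(\beta)(\RR) = 1 + a\beta\,\nu(\RR)$ forces $\nu(\RR) = 0$. For nonnegativity, fix any $E \in \BBB$; then $\zeta(E) + a\beta\,\nu(E) \ge 0$ for every $\beta \in (0,1)$, and letting $\beta \to 1^-$ gives $\zeta(E) + a\,\nu(E) \ge 0$. Taken together, these two facts say precisely that $\xi := \zeta + a\,\nu$ is a probability measure on $(\RR, \BBB)$.

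The representation then falls out by a one-line substitution: $\beta\xi + (1-\beta)\zeta = \zeta + a\beta\,\nu = Q(\beta)$, as required. Most of the work is conceptual rather than technical — the two hypotheses freeze both the \emph{magnitude} (linear in $\beta$) and the \emph{direction} ($\nu$) of the deviation $Q(\beta) - \zeta$, and nothing else is left to choose. The only genuinely nontrivial step is the limiting argument $\beta \to 1^-$ used to upgrade the pointwise nonnegativity of $Q(\beta)$ on $(0,1)$ into nonnegativity of $\xi$; this is the place where one must use that $Q$ is defined on an interval up to (but not including) $1$, and use continuity of $\beta \mapsto Q(\beta)(E)$ in $\beta$, which is immediate from the affine form derived above.
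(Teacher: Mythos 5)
Your proof is correct and follows essentially the same route as the paper: both exploit the two hypotheses to write $Q(\beta) = \zeta + a\beta\nu$ for a fixed signed measure $\nu$ and a constant $a>0$, and then read off $\xi$ from this affine form. The only difference is that the paper simply sets $\xi = Q(1)$ (implicitly evaluating $Q$ at the endpoint, which lies outside the stated domain $(0,1)$), whereas you construct $\xi = \zeta + a\nu$ explicitly and verify via $\nu(\RR)=0$ and the limit $\beta\to 1^-$ that it is a genuine probability measure --- a slightly more careful treatment of that boundary point.
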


Due to the properties of contaminated distributions, Theorem
\ref{theorem:GrossErrorAsymptotics} in fact also holds for other norms than
the uniform norm. However, the choice of the norm is important when we wish
to bound the norm of the right-hand side of
(\ref{eq:GrossErrorLimit}). Such a bound is achieved in Lemma
\ref{lemma:KNormBound}.

\begin{lemma}
\label{lemma:KNormBound}
  Let $A \in \MM(p,p)$. Then 
\begin{align}
  \left\|\sum_{k=1}^p L_{A}\left(\zeta^{\otimes(k-1)}\otimes\nu\otimes\zeta^{\otimes(p-k)}\right)\right\|_\infty
  \le 2p.
\end{align}
\end{lemma}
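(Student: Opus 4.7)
The plan is to bound each summand separately by $2$ and then apply the triangle inequality. Writing $\mu_k = \zeta^{\otimes(k-1)}\otimes\nu\otimes\zeta^{\otimes(p-k)}$ for $k = 1, \ldots, p$, the claim will follow once I have shown that $\|L_{A}(\mu_k)\|_\infty \le 2$ for each $k$, since the triangle inequality for $\|\cdot\|_\infty$ then gives the factor $p$.

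To bound a single term, I fix $k$ and $x \in \RR^p$ and let $A_{\cdot k}$ denote the $k$-th column of $A$. The key step is to apply Fubini's theorem (to the positive and negative parts $\nu^+$ and $\nu^-$ separately) to integrate out the $p-1$ coordinates governed by $\zeta$ first, which yields
\begin{align*}
L_{A}(\mu_k)((-\infty, x_1]\times\cdots\times(-\infty, x_p])
= \int_{\RR^{p-1}} \nu(I(y)) \, d\zeta^{\otimes(p-1)}(y),
\end{align*}
where $I(y) = \{t \in \RR : A_{\cdot k} t + \sum_{j \ne k} A_{\cdot j} y_j \le x\}$. This set is the intersection over $i = 1, \ldots, p$ of the one-dimensional conditions $A_{ik} t \le c_i(x, y)$, each of which is a half-line, all of $\RR$, or empty, depending on the sign of $A_{ik}$. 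Hence $I(y)$ is always an interval in $\RR$, possibly empty or all of $\RR$.

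Finally, I observe that $\nu = (\xi - \zeta)/\|\xi - \zeta\|_\infty$ satisfies $\nu(\RR) = 0$ and $\|\nu\|_\infty = 1$ by construction, because $\|\cdot\|_\infty$ is homogeneous and both $\xi$ and $\zeta$ are probability measures. For any interval $I \subseteq \RR$ I can write $\nu(I)$ as a signed combination of two values of the cumulative function $F_\nu(\cdot) = \nu((-\infty,\cdot])$ (handling endpoint closures via monotone limits and using $\nu(\RR) = 0$ to dispose of unbounded intervals), which gives $|\nu(I)| \le 2\|\nu\|_\infty = 2$. Plugging this bound into the integral above yields $|L_{A}(\mu_k)((-\infty, x])| \le 2$, and taking the supremum over $x$ followed by the triangle inequality completes the argument. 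I expect the only real obstacle to be the sign-case verification that $I(y)$ is always an interval; everything else is bookkeeping once Fubini and the Kolmogorov-norm bound on intervals are in place.
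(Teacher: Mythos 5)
Your proposal is correct and follows essentially the same route as the paper: the paper also reduces to bounding a single summand by $2$ via Fubini's theorem for signed measures, the observation that the relevant section $\{t : At + \sum_{j\neq k}A_{\cdot j}y_j \le x\}$ is an interval, the bound $|\nu(I)|\le 2\|\nu\|_\infty$ for intervals $I$, and the triangle inequality over the $p$ terms. The only cosmetic difference is that the paper packages the single-summand bound as a separate lemma in terms of total variation norms ($\|L_A(\mu_1\otimes\cdots\otimes\mu_p)\|_\infty \le 2\|\mu_i\|_\infty\prod_{k\neq i}\|\mu_k\|_{tv}$), whereas you exploit directly that the remaining factors are the probability measure $\zeta$.
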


Combining Theorem \ref{theorem:GrossErrorAsymptotics} and Lemma
\ref{lemma:KNormBound} yields the following corollary, which we give
without proof. 

%Bound independent of \|\Gamma(A,B,\nu)\|_\infty is convenient for the
%later results in the non-classical asymptotic scenario: It makes possible that
%the argument to $P(\|W\|_\infty\le \cdot) can be ensured to be
%nonnegative. The upper bound then depends only on A and B through F^A,
%which determines the distribution of \|W\|_\infty, and not through
%\Gamma(A,B,\nu). The corollary also shows that the convergence rate of
%F_\beta^A-F_\beta^B in fact is asymptotically exactly linear.
\begin{corollary}
\label{corr:TotalNormBound}
Let $A,B\in\MM(p,p)$ be such that $F^A=F^B$. Define
\begin{align}
  \Gamma(A,B,\nu)=&\sum_{k=1}^p L_{A}\left(\zeta^{\otimes(k-1)}\otimes\nu\otimes\zeta^{\otimes(p-k)}\right)\notag\\
                 -&\sum_{k=1}^p L_{B}\left(\zeta^{\otimes(k-1)}\otimes\nu\otimes\zeta^{\otimes(p-k)}\right).\label{eq:DefGamma}
\end{align}
Then we have, for $\beta \to 0$,
\begin{align}
   \| F_{\beta}^A - F_{\beta}^B \|_{\infty} & \sim \left\| \Gamma\left(A,B,\frac{\xi - \zeta}{\|\xi - \zeta\|_{\infty}}\right) \right\|_{\infty} \|P_e(\beta)-\zeta\|_{\infty} 
      \le 4p\beta \|\xi-\zeta\|_{\infty}.
\end{align}
\end{corollary}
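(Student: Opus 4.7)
The plan is to apply Theorem \ref{theorem:GrossErrorAsymptotics} separately to $A$ and to $B$ and subtract, using the hypothesis $F^A = F^B$ to eliminate the unperturbed terms. First, Theorem \ref{theorem:GrossErrorAsymptotics} gives
\[
\frac{L_A(P_e(\beta)^{\otimes p}) - L_A(\zeta^{\otimes p})}{\|P_e(\beta)-\zeta\|_\infty}
\;\longrightarrow\; \sum_{k=1}^p L_A(\zeta^{\otimes(k-1)}\otimes \nu\otimes \zeta^{\otimes(p-k)})
\]
in $\|\cdot\|_\infty$, and the analogous statement for $B$. Since $F^A = F^B$ is equivalent to $L_A(\zeta^{\otimes p}) = L_B(\zeta^{\otimes p})$, subtracting the two limits cancels the $L_\cdot(\zeta^{\otimes p})$ terms in the numerator and produces, in $\|\cdot\|_\infty$,
\[
\frac{L_A(P_e(\beta)^{\otimes p}) - L_B(P_e(\beta)^{\otimes p})}{\|P_e(\beta)-\zeta\|_\infty}
\;\longrightarrow\; \Gamma(A,B,\nu).
\]

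Next I would translate this convergence of signed measures into convergence of distribution functions using the remark from Section \ref{sec:UpperBound} that $\|\mu\|_\infty$ coincides with the supremum norm of the associated distribution function. This gives
\[
\frac{\|F_\beta^A - F_\beta^B\|_\infty}{\|P_e(\beta)-\zeta\|_\infty}
\;\longrightarrow\; \|\Gamma(A,B,\nu)\|_\infty,
\]
which is exactly the asymptotic equivalence claimed (in the generic case $\Gamma(A,B,\nu)\neq 0$; if the limit is zero the left-hand ratio still tends to zero, so the stated bound is trivially preserved).

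To obtain the explicit numerical bound, I combine two elementary observations. From the definition $P_e(\beta) - \zeta = \beta(\xi-\zeta)$ we read off $\|P_e(\beta)-\zeta\|_\infty = \beta \|\xi-\zeta\|_\infty$, which in particular makes $\nu = (\xi-\zeta)/\|\xi-\zeta\|_\infty$ well-defined. Applying Lemma \ref{lemma:KNormBound} to both $A$ and $B$ and using the triangle inequality in the definition (\ref{eq:DefGamma}) yields $\|\Gamma(A,B,\nu)\|_\infty \le 4p$. Substituting these two facts into the asymptotic equivalence gives
\[
\|F_\beta^A - F_\beta^B\|_\infty \sim \|\Gamma(A,B,\nu)\|_\infty \cdot \beta\|\xi-\zeta\|_\infty \le 4p\beta\|\xi-\zeta\|_\infty,
\]
as $\beta \to 0$. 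There is no real obstacle here; the only subtle point is bookkeeping the possibility that $\Gamma(A,B,\nu) = 0$, in which case the asymptotic equivalence should be read as the assertion that $\|F_\beta^A - F_\beta^B\|_\infty = o(\beta)$, which is still dominated by the stated bound.
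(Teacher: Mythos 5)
Your argument is correct and is precisely the route the paper intends: the corollary is stated there without proof, with the remark that it follows by combining Theorem \ref{theorem:GrossErrorAsymptotics} (applied to $A$ and $B$, with $F^A=F^B$ cancelling the zeroth-order terms) and Lemma \ref{lemma:KNormBound} (with the triangle inequality giving the bound $4p$), together with the identity $\|P_e(\beta)-\zeta\|_\infty=\beta\|\xi-\zeta\|_\infty$. Your explicit handling of the degenerate case $\Gamma(A,B,\nu)=0$, where the relation $\sim$ must be read as $\|F_\beta^A-F_\beta^B\|_\infty=o(\beta)$, is a point the paper glosses over and is worth keeping.
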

Corollary \ref{corr:TotalNormBound} shows that, if $F^A=F^B$, the distance between the observational distributions
$F_{\beta}^A$ and $F_{\beta}^B$ decreases asymptotically linearly in $\beta$ as $\beta$ tends to zero.

The corollary is stated under the condition that
$F^A=F^B$. For later use, we characterize the occurrence of this in the
next lemma, in terms of $\zeta$, $A$ and $B$, for the case where $A$ and $B$ are invertible. Recall that a probability distribution $Q$ on
$(\RR,\BBB)$ is said to be symmetric if, for every random variable $Y$
with distribution $Q$, $Y$ and $-Y$ have the same distribution. The proof
of Lemma \ref{lemma:GaussianUniqueness} is a simple consequence of Theorem 4 of \cite{EK2004}.

%Remember: Results from Eriksson and Koivunen consider errors with at most one Gaussian coordinate. However,
%The result below corresponds to the case of i.i.d. distributions for the error coordinates. Therefore,
%We either have that all error coordinates are Gaussian, or no error coordinates are Gaussian, and no mention
%of intermediate cases is relevant.
\begin{lemma}
\label{lemma:GaussianUniqueness}
Let $A,B \in \MM(p,p)$ be invertible. Then the following hold:
\begin{enumerate}
\item If $\zeta$ is Gaussian, then $F^A = F^B$ if and only
  if $AA^t=BB^t$.
\item If $\zeta$ is non-Gaussian and symmetric, then $F^A = F^B$ if and only
  if we have $A=B\Lambda P$ for some permutation matrix $P$ and a diagonal matrix
  $\Lambda$ satisfying $\Lambda^2=I$.
\item If $\zeta$ is non-symmetric, then $F^A = F^B$ if and only if $A=B P$ for some permutation matrix $P$.
\end{enumerate}
\end{lemma}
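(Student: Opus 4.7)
The plan is to translate the condition $F^A = F^B$ into the distributional identity $A\epsilon \eqlaw B\epsilon$, where $\epsilon$ is a random vector with $p$ independent $\zeta$-distributed coordinates, and then dispatch the three cases separately using either a direct Gaussian calculation or the Comon-type identifiability result (Theorem 4 of \cite{EK2004}) the authors explicitly cite.

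For part (i), with $\zeta = N(0,\sigma^2)$ for some $\sigma^2 > 0$, I would simply observe that $\epsilon \sim N(0, \sigma^2 I)$, so $A\epsilon \sim N(0,\sigma^2 AA^t)$ and $B\epsilon \sim N(0, \sigma^2 BB^t)$. Two centred Gaussians coincide exactly when their covariance matrices do, which yields the equivalence $F^A = F^B \Leftrightarrow AA^t = BB^t$ directly.

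For parts (ii) and (iii), where $\zeta$ is non-Gaussian, no coordinate of $\epsilon$ is Gaussian, so Theorem 4 of \cite{EK2004} applies and yields that any invertible $A, B$ with $A\epsilon \eqlaw B\epsilon$ must be related by $A = B \Lambda P$ for some permutation matrix $P$ and some invertible diagonal $\Lambda = \diag(\lambda_1, \ldots, \lambda_p)$. The residual task is to determine which scalings $\lambda_i$ are admissible. Substituting the decomposition into $A\epsilon \eqlaw B\epsilon$ and using that $P\epsilon \eqlaw \epsilon$ by permutation invariance of i.i.d.\ coordinates, invertibility of $B$ then reduces the condition to $\Lambda \epsilon \eqlaw \epsilon$, which by independence decouples into the one-dimensional conditions $\lambda_i Y \eqlaw Y$ for $Y \sim \zeta$, $i = 1, \ldots, p$.

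The admissibility question $\lambda Y \eqlaw Y$ with $\lambda \neq 0$ I would settle via characteristic functions: the identity $\phi_\zeta(\lambda t) = \phi_\zeta(t)$, iterated, gives $\phi_\zeta(\lambda^n t) = \phi_\zeta(t)$ for every $n \in \ZZ$, and if $|\lambda| \neq 1$ then either $\lambda^n \to 0$ or $\lambda^{-n} \to 0$, forcing $\phi_\zeta \equiv 1$ by continuity at the origin, contradicting the nondegeneracy of $\zeta$. Hence $\lambda \in \{-1, +1\}$, and $\lambda = -1$ is admissible if and only if $\zeta$ is symmetric. This yields $\Lambda^2 = I$ in case (ii) and $\Lambda = I$ in case (iii), and the converse implications follow immediately by reversing the reduction: if $A = B\Lambda P$ with the stated constraints on $\Lambda$, then $A\epsilon \eqlaw B\Lambda P\epsilon \eqlaw B\Lambda \epsilon \eqlaw B\epsilon$. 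I do not foresee a real obstacle, since the substantive content is already packaged in Theorem 4 of \cite{EK2004}; what remains is only the short characteristic-function calculation for the admissible scalings.
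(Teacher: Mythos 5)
Your proposal is correct and follows essentially the same route as the paper's proof: part (1) by the direct Gaussian covariance computation, and parts (2)--(3) by invoking Theorem 4 of \cite{EK2004} to obtain $A=B\Lambda P$ and then reducing to the scalar condition $\Lambda_{ii}Y\eqlaw Y$. The only (harmless) divergence is in how $|\Lambda_{ii}|=1$ is established --- you iterate the characteristic-function identity $\varphi(\lambda t)=\varphi(t)$ and use continuity at the origin, whereas the paper argues via a point of increase of the distribution function of $|Y|$ before turning to characteristic functions only to exclude $\Lambda_{ii}=-1$ in the non-symmetric case; both arguments are valid.
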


\section{Asymptotic properties of ICA models with near-Gaussian noise}

\label{sec:Identifiability}

%Important articles for limit results regarding empirical processes:
%Kiefer Wolfowitz (1958)
%Gaenssler Stute (1979)
%Adler Brown (1986)
%Csorgo Horvath (1988)

%Hogan & Siegmund (1986) describe the dependency of the limit distribution
%on the true observational distribution: A phenomenon which only occurs in
%the multidimensional case, intuitively because in the multidimensional
%case, dependency between coordinates means that not all distributions can
%be reduced to the same moment structure by centering and scaling.

%Note that as almost sure convergence implies weak convergence, it is
%immediate that any continuous Gaussian process must have a continuous
%covariance function. In our case, this conversely implies that the
%Gaussian processes under consideration in general not will have continuous
%sample paths, as the CDF's of the observational distributions, and thus
%the covariance functions of the Gaussian fields, may have
%jumps. Separability of the CDF's, however, ensure separability of the
%Gaussian fields.

We now use the results obtained in Section \ref{sec:UpperBound} to obtain asymptotic results on ICA
models with near-Gaussian noise. We will consider a sequence of ICA models with increasingly
near-Gaussian noise, and will investigate the asymptotic properties of this sequence of models.

We need some basic facts about random fields in order to formulate our results, see
\cite{MR1914748} and \cite{MR1472736} for an overview. Recall that a
mapping $R:\RR^p\times\RR^p\to\RR$ is said to be symmetric if
$R(x,y)=R(y,x)$ for all $x,y\in\RR^p$, and is said to be positive
semidefinite if for all $n\ge1$ and for all $x_1,\ldots,x_n\in\RR^p$ and
$\xi_1,\ldots,\xi_n\in\RR$, it holds that
\begin{align}
  \sum_{i=1}^n\sum_{j=1}^n\xi_i R(x_i,x_j)\xi_j\ge0.
\end{align}
For any symmetric and positive semidefinite function $R:\RR^p\times\RR^p\to\RR$, there exists a
mean zero Gaussian random field $W$ with covariance function $R$ and with sample paths in $\RR^{\RR^p}$. In general, $W$ will not have continuous
paths. For a general random field $W$, we associate with $W$ its intrinsic pseudometric $\rho$ on $\RR^p$, given by
\begin{align}
  \rho(x,y) = \sqrt{E(W(x)-W(y))^2}.
\end{align}
If the metric space $(\RR^p,\rho)$ is separable, we say that $W$ is
separable. In this case, $\|W\|_\infty = \sup_{x\in
  D}|W(x)|$ with probability one, for any countable subset $D$ of $\RR^p$ 
which is dense under the pseudometric $\rho$. In particular, whenever the $\sigma$-algebra on the
space where $W$ is defined is complete, $\|W\|_\infty$ is measurable.

The following lemma describes some important properties of a class of Gaussian
fields particularly relevant to us. The result is well known, see for
example \cite{MR0185641}, and can be proven quite directly using the approximation results in \cite{MR936377}.

\begin{lemma}
\label{lemma:FGaussianWellDef}
Let $F$ be a cumulative distribution function on $\RR^p$. There exists a
$p$-dimensional separable mean zero Gaussian field $W$ which has covariance function
$R:\RR^p\times\RR^p\to\RR$ given by $R(x,y)=F(x\land y)-F(x)F(y)$ for
$x,y\in\RR^p$, where $x\land y$ is the coordinate wise minimum of $x$ and
$y$. With $\QQ$ denoting the rationals, it holds that $\|W\|_\infty=\sup_{x\in\QQ^p}|W(x)|$ and $\|W\|_\infty$ is almost surely finite.
\end{lemma}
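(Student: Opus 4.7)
The plan is to assemble the lemma from three independent inputs. First, I verify that $R$ is a valid covariance function. If $Y$ is a random variable with distribution $F$ on $(\RR^p,\BBB_p)$, then interpreting $\{Y\le x\}$ coordinate-wise yields
\begin{align*}
  \cov(\mathbf{1}_{\{Y\le x\}},\mathbf{1}_{\{Y\le y\}})
    = P(Y\le x\land y) - P(Y\le x)P(Y\le y) = R(x,y).
\end{align*}
Symmetry of $R$ is immediate, and for any $x_1,\ldots,x_n\in\RR^p$ and $\xi_1,\ldots,\xi_n\in\RR$, bilinearity of covariance gives
\begin{align*}
  \sum_{i,j}\xi_i R(x_i,x_j)\xi_j
    = \cov\Bigl(\sum_i \xi_i\mathbf{1}_{\{Y\le x_i\}},\sum_j \xi_j\mathbf{1}_{\{Y\le x_j\}}\Bigr) \ge 0,
\end{align*}
so $R$ is positive semidefinite. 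The general existence statement recalled just before the lemma then yields a mean zero Gaussian field $W$ on $\RR^p$ with covariance function $R$.

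Next, I argue that $(\RR^p,\rho)$ is separable with $\QQ^p$ dense. For $x\in\RR^p$ and $q\in\QQ^p$ with $q\ge x$ coordinate-wise we have $q\land x = x$, so
\begin{align*}
  \rho(x,q)^2 &= R(x,x) + R(q,q) - 2R(x,q) \\
              &= F(x)(1-F(x)) + F(q)(1-F(q)) - 2F(x)(1-F(q)).
\end{align*}
Since multivariate cumulative distribution functions are right-continuous, $F(q)\to F(x)$ as $q\downarrow x$ through $\QQ^p$, and this forces $\rho(x,q)^2\to 0$. Hence $\QQ^p$ is dense in $(\RR^p,\rho)$, the field $W$ may be taken separable, and the identity $\|W\|_\infty = \sup_{q\in\QQ^p}|W(q)|$ holds with probability one by the remark preceding the lemma.

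It remains to show $\|W\|_\infty<\infty$ almost surely, which is where I expect the real work to lie. I would invoke the empirical process approximation results in \cite{MR936377}. Let $Y_1,Y_2,\ldots$ be i.i.d.\ with distribution $F$, let $\widehat{F}_n$ denote the associated empirical distribution function, and form the normalized empirical process $G_n(x) = \sqrt{n}(\widehat{F}_n(x)-F(x))$. A direct calculation shows $\cov(G_n(x),G_n(y)) = R(x,y)$ for every $x,y\in\RR^p$ and $n\ge 1$. The collection of lower orthants $\{(-\infty,x]:x\in\RR^p\}$ has finite Vapnik-Chervonenkis dimension, and is therefore Donsker in Pollard's sense, so $G_n$ converges weakly in $\ell^\infty(\RR^p)$ to a tight mean zero Gaussian process whose law on $\QQ^p$ coincides with that of $W|_{\QQ^p}$. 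Tightness of the limit in $\ell^\infty(\RR^p)$ forces the limit process to be almost surely bounded on $\QQ^p$; transferring this to $W$ through the countable set $\QQ^p$ yields $\|W\|_\infty = \sup_{q\in\QQ^p}|W(q)|<\infty$ almost surely. The first two steps are essentially bookkeeping once one spots the indicator-function representation and uses right-continuity of $F$; the genuine analytic content of the lemma is precisely this last Donsker-type input.
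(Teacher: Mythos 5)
Your proof is correct and follows essentially the route the paper itself indicates: the paper gives no written proof of this lemma, merely noting that the result is well known and can be proven directly from the approximation results in \cite{MR936377}, which is precisely the Donsker-type input you supply for the almost sure finiteness of $\|W\|_\infty$. Your verification that $R$ is a covariance function via the indicator representation, the density of $\QQ^p$ in $(\RR^p,\rho)$ via right-continuity of $F$, and the transfer of almost sure boundedness from the tight weak limit of the empirical process to $W$ through the countable index set are all sound.
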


For a fixed cumulative distribution function $F$, we refer to the Gaussian
field described in Lemma \ref{lemma:FGaussianWellDef} as an $F$-Gaussian
field. We are now ready to formulate our results on asymptotic scenarios in
ICA models.

Theorem \ref{theorem:ClassicalAsymptoticIdentifiability} describes the classical
asymptotic scenario, where the error distribution does not depend on the
sample size $n$. Fix a nondegenerate mean zero probability distribution $\zeta$ on
$(\RR,\BBB)$ and a matrix $A\in\MM(p,p)$. As
in the previous section, we let $F^A$ denote the cumulative distribution
function of $L_A(\zeta^{\otimes p})$, corresponding to the distribution of
$A\epsilon$ when $\epsilon$ is a $p$-dimensional variable with independent
coordinates having distribution $\zeta$. Consider a probability space
$(\Omega,\FFF,P)$ endowed with independent variables $(X_k)_{k\ge 1}$ with
cumulative distribution function $F^A$. Let $\FF_n^A$ be the empirical distribution
function of $X_1,\ldots,X_n$. Also assume that we are given an $F^A$-Gaussian field $W$ on
$(\Omega,\FFF,P)$.

\begin{theorem}
\label{theorem:ClassicalAsymptoticIdentifiability}
Let $c\ge 0$ be a continuity point of the distribution of
$\|W\|_{\infty}$. Then
\begin{align}
  \lim_{n\to\infty}P( \sqrt{n} \| \FF_n^A -F^A \|_\infty > c) & = P(\|W\|_\infty > c),\label{eq:ClassicalIdentifiable}
\end{align}
while in the case where $F^A \neq F^B$, it holds that
\begin{align}
  \lim_{n\to\infty}P( \sqrt{n} \|\FF_n^A-F^B\|_\infty > c) &=1.\label{eq:ClassicalDistinguish}
\end{align}
\end{theorem}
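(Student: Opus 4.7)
The plan is to handle the two claims separately. For (\ref{eq:ClassicalIdentifiable}) I will invoke the multivariate empirical process central limit theorem. The class of lower-left orthants $\{(-\infty,x_1]\times\cdots\times(-\infty,x_p] : x\in\RR^p\}$ is a VC class with constant envelope $1$, hence universal Donsker. Applied to the i.i.d.\ sample $X_1,\ldots,X_n$ with law $F^A$, Donsker's theorem yields that the empirical process $\sqrt{n}(\FF_n^A - F^A)$, regarded as a random element of $\ell^\infty(\RR^p)$, converges in distribution to a centered Gaussian process whose covariance at $(x,y)$ is $F^A(x\wedge y) - F^A(x)F^A(y)$. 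By Lemma \ref{lemma:FGaussianWellDef} this limit has the same law as $W$. Since the sup-norm is a continuous functional on $\ell^\infty(\RR^p)$, the continuous mapping theorem gives $\sqrt{n}\|\FF_n^A - F^A\|_\infty \claw \|W\|_\infty$. Applying the Portmanteau theorem at any continuity point $c\ge 0$ of the CDF of $\|W\|_\infty$ then yields (\ref{eq:ClassicalIdentifiable}). A small measurability check is needed to justify treating $\|\FF_n^A - F^A\|_\infty$ as a random variable: since $\FF_n^A$ and $F^A$ are right-continuous, $\|\FF_n^A - F^A\|_\infty = \sup_{x\in\QQ^p}|\FF_n^A(x) - F^A(x)|$, while the analogous identity for $W$ is part of Lemma \ref{lemma:FGaussianWellDef}.

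For (\ref{eq:ClassicalDistinguish}) the work is a short triangle-inequality step combined with (\ref{eq:ClassicalIdentifiable}). Since $F^A$ and $F^B$ are distinct CDFs, $\delta := \|F^A - F^B\|_\infty > 0$, and
\begin{align*}
\sqrt{n}\,\|\FF_n^A - F^B\|_\infty \;\ge\; \sqrt{n}\,\|F^A - F^B\|_\infty - \sqrt{n}\,\|\FF_n^A - F^A\|_\infty \;=\; \sqrt{n}\,\delta - \sqrt{n}\,\|\FF_n^A - F^A\|_\infty.
\end{align*}
The first term diverges to $+\infty$ deterministically, while the second is $O_P(1)$ by (\ref{eq:ClassicalIdentifiable}). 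Hence $\sqrt{n}\,\|\FF_n^A - F^B\|_\infty \to \infty$ in probability, which immediately gives (\ref{eq:ClassicalDistinguish}) for every $c\ge 0$.

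The only nontrivial input is the multivariate Donsker theorem for the orthant class, which is a standard result (either cited directly, e.g., from van der Vaart and Wellner, or derived via the VC-subgraph property of orthants and the uniform entropy bound it implies). Everything else — the continuous mapping step, the triangle inequality, and the measurability bookkeeping — is routine, so I do not anticipate a genuine obstacle beyond correctly setting up the Donsker framework and matching the covariance with that in Lemma \ref{lemma:FGaussianWellDef}.
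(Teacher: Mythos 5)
Your proposal is correct and follows essentially the same route as the paper: the paper derives (\ref{eq:ClassicalIdentifiable}) from Lemma \ref{lemma:AsymptoticConvWaart} (a triangular-array Donsker result that, specialized to constant $F_n=F^A$, is exactly the orthant-class Donsker theorem you invoke) together with the continuous mapping theorem, and proves (\ref{eq:ClassicalDistinguish}) by the same reverse triangle inequality combined with the tightness of $\sqrt{n}\|\FF_n^A-F^A\|_\infty$. No gaps.
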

Equations (\ref{eq:ClassicalIdentifiable}) and
(\ref{eq:ClassicalDistinguish}) roughly state that in the classical asymptotic scenario, $\sqrt{n}\|\FF_{n}^A - F^A\|_{\infty}$ converges in
distribution to $\|W\|_{\infty}$, while $\sqrt{n} \|\FF_n^A -
F^B\|_{\infty}$ is not bounded in probability if $F^A \neq
F^B$. Note that Lemma \ref{lemma:GaussianUniqueness} gives us conditions
for $F^A=F^B$ and $F^A \neq F^B$ depending on $\zeta$.

Next, we consider an asymptotic scenario where the error distribution
is contaminated and the amount of contamination depends on the sample size $n$. As in Section
\ref{sec:UpperBound}, $\xi$ and $\zeta$ are fixed nondegenerate mean zero probability measures on
$(\RR,\BBB)$ with $\xi\neq\zeta$,
$P_e(\beta)=\beta\xi+(1-\beta)\zeta$, $A\in\MM(p,p)$ is a fixed matrix, $F^A$ is the cumulative
distribution function of $L_A(\zeta^{\otimes p})$ and $F^A_\beta$ is the
cumulative distribution function of $L_A(P_e(\beta)^{\otimes
  p})$. Thus, $F_\beta^A$ is the cumulative distribution function of
$A\eps$, where $\eps$ is a $p$-dimensional variable with
independent coordinates having distribution $P_e(\beta)$. Consider a sequence $(\beta_n)$
in $(0,1)$, and consider a probability space $(\Omega,\FFF,P)$ endowed with a triangular array
$(X_{nk})_{1\le k\le n}$ such that for each $n$, the variables
$X_{n1},\ldots,X_{nn}$ are independent variables with cumulative
distribution function $F_{\beta_n}^A$.  Let $\FF_{\beta_n}^A$ be the
empirical distribution function of $X_{n1},\ldots,X_{nn}$. Also
assume that we are given an $F^A$-Gaussian field $W$ on
$(\Omega,\FFF,P)$. We are interested in the asymptotic properties of
$\FF^A_{\beta_n}$. Theorem \ref{theorem:AsymptoticNonIdentifiability} is
our main result for this type of asymptotic scenarios.

\begin{theorem}
\label{theorem:AsymptoticNonIdentifiability}
Let $\lim_n \sqrt{n}\beta_n=k$ for some $k\ge0$. If
$F^A=F^B$, then
\begin{align}
  P(\|W\|_\infty > c + 4pk\|\xi-\zeta\|_\infty)
  &\le \liminf_{n\to\infty}P( \sqrt{n} \|\FF_{\beta_n}^A - F_{\beta_n}^B \|_\infty > c)\notag\\
  &\le \limsup_{n\to\infty}P( \sqrt{n} \|\FF_{\beta_n}^A - F_{\beta_n}^B \|_\infty > c)\notag\\
  &\le P(\|W\|_\infty \ge c - 4pk\|\xi-\zeta\|_\infty).\label{eq:NonstandardNoIdentify}
\end{align}
In particular, if $k=0$ and $c$ is a continuity point of
the distribution of $\|W\|_\infty$, we have
\begin{align}
\label{eq:SimpleAsympNonIdentifyLimit}
    \lim_{n\to\infty}P( \sqrt{n} \|\FF_{\beta_n}^A - F_{\beta_n}^B \|_\infty > c)
    =P(\|W\|_\infty > c).
\end{align}
\end{theorem}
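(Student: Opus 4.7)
The strategy is to combine the deterministic bound of Corollary \ref{corr:TotalNormBound} with a Donsker-type limit theorem for the triangular array $(X_{nk})$, via a triangle inequality in the Kolmogorov norm. Let $Z_n=\sqrt{n}\|\FF^A_{\beta_n}-F^A_{\beta_n}\|_\infty$ and $\delta_n=\sqrt{n}\|F^A_{\beta_n}-F^B_{\beta_n}\|_\infty$. The triangle inequality applied around $F^A_{\beta_n}$ gives the deterministic sandwich
\begin{align*}
 Z_n-\delta_n \;\le\; \sqrt{n}\|\FF^A_{\beta_n}-F^B_{\beta_n}\|_\infty \;\le\; Z_n+\delta_n,
\end{align*}
so that the problem splits into controlling a stochastic term and a deterministic bias term.

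For the deterministic term, Corollary \ref{corr:TotalNormBound} applies because $F^A=F^B$ by assumption. Setting $\nu=(\xi-\zeta)/\|\xi-\zeta\|_\infty$, it yields $\|F^A_{\beta_n}-F^B_{\beta_n}\|_\infty \sim \|\Gamma(A,B,\nu)\|_\infty \cdot \beta_n\|\xi-\zeta\|_\infty$, with $\|\Gamma(A,B,\nu)\|_\infty\le 4p$ by Lemma \ref{lemma:KNormBound} and the triangle inequality. Together with $\sqrt{n}\beta_n\to k$ this gives $\delta_n \to \|\Gamma(A,B,\nu)\|_\infty\cdot k\|\xi-\zeta\|_\infty \le 4pk\|\xi-\zeta\|_\infty$.

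For the stochastic term, I would prove that $Z_n\claw\|W\|_\infty$. The key inputs are the uniform convergence $F^A_{\beta_n}\to F^A$ from Theorem \ref{theorem:GrossErrorAsymptotics}, together with the fact that the orthant-indicator class $\{1_{(-\infty,t]}:t\in\RR^p\}$ is uniformly bounded and VC, hence uniformly Donsker. Applied to the triangular array this yields convergence in distribution of $\sqrt{n}(\FF^A_{\beta_n}-F^A_{\beta_n})$ to $W$ in sup-norm over $\RR^p$. Concretely, the finite-dimensional marginals converge by the Lindeberg--Feller CLT, using that the covariances $F^A_{\beta_n}(s\wedge t)-F^A_{\beta_n}(s)F^A_{\beta_n}(t)$ tend to $F^A(s\wedge t)-F^A(s)F^A(t)$; asymptotic equicontinuity in the intrinsic pseudometric follows from a VC entropy bound applied row by row. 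The continuous mapping theorem for the sup-functional, whose value on $W$ is $\|W\|_\infty$ by the separability statement in Lemma \ref{lemma:FGaussianWellDef}, then gives $Z_n\claw\|W\|_\infty$.

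The two ingredients combine via the portmanteau lemma. Fix $\eta>0$ and take $n$ large enough that $\delta_n\le 4pk\|\xi-\zeta\|_\infty+\eta$. The lower half of the sandwich then gives $\{Z_n>c+4pk\|\xi-\zeta\|_\infty+\eta\}\subseteq\{\sqrt{n}\|\FF^A_{\beta_n}-F^B_{\beta_n}\|_\infty>c\}$; taking $\liminf_n$, applying portmanteau to the open half-line $(c+4pk\|\xi-\zeta\|_\infty+\eta,\infty)$, and letting $\eta\downarrow0$ by continuity from below of $P(\|W\|_\infty>\cdot)$ gives the lower bound in (\ref{eq:NonstandardNoIdentify}). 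The upper bound follows by the symmetric argument from the other half of the sandwich, portmanteau for closed half-lines, and continuity from above of $P(\|W\|_\infty\ge\cdot)$. The special case (\ref{eq:SimpleAsympNonIdentifyLimit}) is immediate: when $k=0$ both sides collapse, and at a continuity point of $\|W\|_\infty$ one has $P(\|W\|_\infty\ge c)=P(\|W\|_\infty>c)$. The main obstacle is the triangular-array Donsker step, since the source law $F^A_{\beta_n}$ changes with $n$; classical Donsker is not directly applicable, and one must either unpack the fidi-plus-equicontinuity argument using the VC structure, or invoke a suitable triangular-array empirical process theorem such as Theorem 2.11.9 of van der Vaart and Wellner.
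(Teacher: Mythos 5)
Your proposal is correct and follows essentially the same route as the paper's proof: the triangle-inequality sandwich around $F^A_{\beta_n}$, the deterministic bias bound from Corollary \ref{corr:TotalNormBound} combined with $\sqrt{n}\beta_n\to k$, a triangular-array Donsker theorem (the paper's Lemma \ref{lemma:AsymptoticConvWaart}, proved via Theorem 2.11.1 of van der Vaart and Wellner together with the uniform convergence $F^A_{\beta_n}\to F^A$ from Theorem \ref{theorem:GrossErrorAsymptotics}) to get $\sqrt{n}\|\FF^A_{\beta_n}-F^A_{\beta_n}\|_\infty\claw\|W\|_\infty$, and a portmanteau argument with the auxiliary $\eta$ sent to zero. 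The obstacle you flag (the changing source law across rows) is exactly what the paper isolates into its Lemma \ref{lemma:AsymptoticConvWaart}, so no gap remains.
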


Theorem \ref{theorem:AsymptoticNonIdentifiability} essentially shows that for the
asymptotic scenario considered, the convergence of $F^A_{\beta_n}$ to $F^A$
is fast enough to ensure that the asymptotic properties of
$\FF_{\beta_n}^A$ are determined by $F^A$ instead of $F^A_{\beta_n}$. Corollary
\ref{coro:GaussianAsymptoticNonIdentifiability} applies this result to the
case where the error distributions become close to Gaussian without being Gaussian.

\begin{corollary}
\label{coro:GaussianAsymptoticNonIdentifiability}
Assume that $\lim_n \sqrt{n}\beta_n=0$. Let $A,B\in\MM(p,p)$ be invertible. Assume that $AA^t=BB^t$
while $A\neq B\Lambda P$ for all diagonal $\Lambda$ with $\Lambda^2=I$ and
all permutation matrices $P$. Let $\zeta$ be a nondegenerate Gaussian
distribution and let $\xi$ be such that $P_e(\beta)$ is non-Gaussian for
all $\beta\in(0,1)$. Let $c$ be a point of continuity for the
distribution of $\|W\|_\infty$, with $W$ an $F^A$-Gaussian field. Then
\begin{enumerate}
\item $F_{\beta_n}^A\neq F_{\beta_n}^B$ for all $n\ge1$.
\item $\lim_{n\to\infty}P( \sqrt{n} \|\FF_{\beta_n}^A - F_{\beta_n}^B
  \|_\infty > c)= P(\|W\|_\infty > c)$.
\end{enumerate}
\end{corollary}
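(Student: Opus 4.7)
The corollary is essentially a bookkeeping exercise that stitches together Lemma~\ref{lemma:GaussianUniqueness} and Theorem~\ref{theorem:AsymptoticNonIdentifiability}, so I would treat the two assertions in order and keep the argument short.

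For assertion (1), the plan is to apply Lemma~\ref{lemma:GaussianUniqueness} with the contaminated distribution $P_e(\beta_n)$ playing the role of $\zeta$. First I would observe that $P_e(\beta_n)=\beta_n\xi+(1-\beta_n)\zeta$ is again a nondegenerate mean zero probability measure: it is mean zero as a convex combination of mean zero measures, and it cannot be a point mass because then, having mean zero, it would equal $\delta_0$, contradicting that both $\xi$ and $\zeta$ are nondegenerate. By hypothesis $P_e(\beta_n)$ is non-Gaussian, so Lemma~\ref{lemma:GaussianUniqueness} applies in its case (2) or (3) (depending on symmetry). In either case $F_{\beta_n}^A=F_{\beta_n}^B$ would force $A=B\Lambda P$ for some diagonal $\Lambda$ with $\Lambda^2=I$ and some permutation $P$ (in case (3) with $\Lambda=I$), which contradicts the assumption on $A$ and $B$. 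Hence $F_{\beta_n}^A\neq F_{\beta_n}^B$ for every $n\ge 1$.

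For assertion (2), the key observation is that the hypothesis $AA^t=BB^t$, combined with the fact that $\zeta$ itself is Gaussian, lets us invoke Lemma~\ref{lemma:GaussianUniqueness}(1) to conclude $F^A=F^B$. This is exactly the hypothesis required by Theorem~\ref{theorem:AsymptoticNonIdentifiability}. Since moreover $\sqrt{n}\beta_n\to 0$, we have $k=0$, and $c$ is a point of continuity for $\|W\|_\infty$ by assumption, so the simplified conclusion \eqref{eq:SimpleAsympNonIdentifyLimit} of Theorem~\ref{theorem:AsymptoticNonIdentifiability} gives precisely
\begin{align*}
  \lim_{n\to\infty}P\bigl(\sqrt{n}\,\|\FF_{\beta_n}^A - F_{\beta_n}^B\|_\infty > c\bigr)
  = P(\|W\|_\infty > c),
\end{align*}
which is the desired identity.

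There is no real obstacle here; the only points demanding a moment of care are verifying that $P_e(\beta_n)$ genuinely satisfies the standing hypotheses of Lemma~\ref{lemma:GaussianUniqueness} (nondegenerate, mean zero, and either symmetric or non-symmetric so that case (2) or (3) is available), and recognizing that the weakened conclusion $A=BP$ in case (3) is itself a special instance of $A=B\Lambda P$ with $\Lambda=I$, so that the hypothesis $A\neq B\Lambda P$ rules out both cases simultaneously. Once those observations are in place, both parts reduce to direct invocations of previously established results.
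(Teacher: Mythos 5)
Your proof is correct and follows essentially the same route as the paper: part (1) by applying Lemma \ref{lemma:GaussianUniqueness} with $P_e(\beta_n)$ in the role of the error distribution, and part (2) by combining Lemma \ref{lemma:GaussianUniqueness}(1) with the $k=0$ case of Theorem \ref{theorem:AsymptoticNonIdentifiability}. The extra checks you record (nondegeneracy and mean zero of $P_e(\beta_n)$, and that case (3)'s conclusion $A=BP$ is subsumed by $A=B\Lambda P$) are details the paper leaves implicit, and they are all verified correctly.
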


Statement $(1)$ of Corollary
\ref{coro:GaussianAsymptoticNonIdentifiability} shows that for any finite
$n$, we are in the case where, were the error distribution not changing
with $n$, it would be possible to asymptotically distinguish
$F_{\beta_n}^A$ and $F_{\beta_n}^B$ at rate $1/\sqrt{n}$ as in (\ref{eq:ClassicalDistinguish}) of the classical
case. However, statement $(2)$ shows that as $n$ increases and the error
distribution becomes closer to a Gaussian distribution, distinguishing
$F_{\beta_n}^A$ and $F_{\beta_n}^B$ at rate $1/\sqrt{n}$ is nonetheless impossible, with a limit
result similar to (\ref{eq:ClassicalIdentifiable}). Note that the condition in Corollary \ref{coro:GaussianAsymptoticNonIdentifiability} involving $A\neq B\Lambda P$
is the minimum requirement for non-Gaussian error distributions to asymptotically distinguish $F^A$ and $F^B$ in
the classical scenario (see Lemma 3.5).

%However, in the above, a rate of identifiability slower than
%1/\sqrt{n} might still be possible.

Theorem \ref{theorem:AsymptoticNonIdentifiability} and Corollary
\ref{coro:GaussianAsymptoticNonIdentifiability} cover the case $\beta_n = o(1/\sqrt{n})$, in particular
the case $\beta_n = n^{-\rho}$ for $\rho> 1/2$. We end this section with a
result showing that, under some further regularity conditions,
distinguishing $F^A_{\beta_n}$ and $F^B_{\beta_n}$ at rate
$1/\sqrt{n}$ is possible when $0<\rho<1/2$.

\begin{theorem}
\label{theorem:AsymptoticIdentifiabilitySlowBeta}
Let $\rho\in(0,1/2)$ and let $\beta_n = n^{-\rho}$. For all
$A\in\MM(p,p)$, define
\begin{align}
\label{eq:Gamma1Special}
  \Gamma_1(A) =&\sum_{k=1}^p L_{A}\left(\zeta^{\otimes(k-1)}\otimes\frac{\xi-\zeta}{\|\xi-\zeta\|_\infty}\otimes\zeta^{\otimes(p-k)}\right).
\end{align}
If either $F^A\neq F^B$ or $F^A=F^B$ and $\Gamma_1(A)\neq\Gamma_1(B)$, then
\begin{align}
  \lim_{n\to\infty}P( \sqrt{n} \|\FF_{\beta_n}^A - F_{\beta_n}^B \|_\infty > c)=1.
\end{align}
\end{theorem}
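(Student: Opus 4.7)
My plan is to split $\sqrt{n}\|\FF^A_{\beta_n}-F^B_{\beta_n}\|_\infty$ using the triangle inequality into a deterministic ``signal'' term $\sqrt{n}\|F^A_{\beta_n}-F^B_{\beta_n}\|_\infty$ and a stochastic ``noise'' term $\sqrt{n}\|\FF^A_{\beta_n}-F^A_{\beta_n}\|_\infty$, and then to show that the signal tends to infinity while the noise stays bounded in probability. Since $\sqrt{n}\beta_n=n^{1/2-\rho}\to\infty$ for $\rho\in(0,1/2)$, the rate at which the signal grows is exactly what drives the conclusion.

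\textbf{Controlling the noise.} The triangular array $X_{n1},\dots,X_{nn}$ is i.i.d.\ with CDF $F^A_{\beta_n}$, and the class of lower-left orthants in $\RR^p$ is a VC class of finite index depending only on $p$. Standard VC-based uniform empirical process bounds (or a multivariate Dvoretzky--Kiefer--Wolfowitz type inequality) give a constant $C_p$ such that
\begin{align}
P\bigl(\sqrt{n}\|\FF^A_{\beta_n}-F^A_{\beta_n}\|_\infty>t\bigr)\le C_p e^{-2t^2}
\end{align}
for all $t>0$ and all $n$, uniformly in the underlying distribution. Hence $\sqrt{n}\|\FF^A_{\beta_n}-F^A_{\beta_n}\|_\infty=O_P(1)$.

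\textbf{Signal in the two cases.} If $F^A\neq F^B$, then $\delta:=\|F^A-F^B\|_\infty>0$, and Theorem \ref{theorem:GrossErrorAsymptotics} yields $F^A_{\beta_n}\to F^A$ and $F^B_{\beta_n}\to F^B$ uniformly, so by the triangle inequality $\|F^A_{\beta_n}-F^B_{\beta_n}\|_\infty\to\delta>0$; thus $\sqrt{n}\|F^A_{\beta_n}-F^B_{\beta_n}\|_\infty\to\infty$. If instead $F^A=F^B$, Corollary \ref{corr:TotalNormBound} is applicable and gives
\begin{align}
\|F^A_{\beta_n}-F^B_{\beta_n}\|_\infty\sim\|\Gamma_1(A)-\Gamma_1(B)\|_\infty\,\|\xi-\zeta\|_\infty\,\beta_n,
\end{align}
using that $\Gamma(A,B,(\xi-\zeta)/\|\xi-\zeta\|_\infty)=\Gamma_1(A)-\Gamma_1(B)$ and $\|P_e(\beta_n)-\zeta\|_\infty=\beta_n\|\xi-\zeta\|_\infty$. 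By assumption $\Gamma_1(A)\neq\Gamma_1(B)$, so the constant is strictly positive, and therefore
\begin{align}
\sqrt{n}\|F^A_{\beta_n}-F^B_{\beta_n}\|_\infty\ \sim\ \|\Gamma_1(A)-\Gamma_1(B)\|_\infty\|\xi-\zeta\|_\infty\,n^{1/2-\rho}\ \to\ \infty.
\end{align}

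\textbf{Conclusion and main obstacle.} Combining these two pieces, $\sqrt{n}\|\FF^A_{\beta_n}-F^B_{\beta_n}\|_\infty$ is bounded below by a deterministic sequence tending to $+\infty$ minus an $O_P(1)$ term, and thus converges to $+\infty$ in probability; in particular $P(\sqrt{n}\|\FF^A_{\beta_n}-F^B_{\beta_n}\|_\infty>c)\to 1$ for every fixed $c\ge 0$. The one genuinely delicate step is the uniform empirical-process bound in the noise step: the underlying law changes with $n$, so the classical Donsker theorem cannot be invoked directly, and one must rely on the fact that the relevant orthant class is Glivenko--Cantelli/Donsker uniformly over all Borel probability measures on $\RR^p$. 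Everything else reduces to invoking Theorem \ref{theorem:GrossErrorAsymptotics} and Corollary \ref{corr:TotalNormBound} already established in Section \ref{sec:UpperBound}.
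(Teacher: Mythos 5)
Your proof is correct, but it takes a genuinely different route from the paper's. The paper works \emph{pointwise}: it fixes a single $x\in\RR^p$ at which the relevant measures differ, bounds $P(\sqrt{n}\|\FF_{\beta_n}^A-F_{\beta_n}^B\|_\infty>c)$ below by $P(\sqrt{n}|\FF_{\beta_n}^A(x)-F_{\beta_n}^B(x)|>c)$, and then only needs the \emph{ordinary} central limit theorem for the triangular array of Bernoulli variables $1_{I_x}(X_{nk})$ to control the stochastic fluctuation at that one point, while the deterministic part $\sqrt{n}(F^A_{\beta_n}(x)-F^B_{\beta_n}(x))$ is shown to diverge via the explicit polynomial expansion $L_A(P_e(\beta)^{\otimes p})=\sum_{k=0}^p\beta^k\|\xi-\zeta\|_\infty^k\Gamma_k(A)$, whose $k=1$ term dominates at rate $n^{1/2-\rho}$. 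You instead keep the full uniform norm, which forces you to control $\sqrt{n}\|\FF^A_{\beta_n}-F^A_{\beta_n}\|_\infty$ uniformly over a law that changes with $n$; your appeal to a distribution-free VC/DKW-type bound does the job (and in fact you could simply cite Lemma \ref{lemma:AsymptoticConvWaart} together with Theorem \ref{theorem:GrossErrorAsymptotics}, which already give weak convergence of $\sqrt{n}(\FF^A_{\beta_n}-F^A_{\beta_n})$ in $\ell_\infty(\RR^p)$ and hence the $O_P(1)$ you need, without importing any external inequality). Your treatment of the signal via Corollary \ref{corr:TotalNormBound} is also legitimate, but note the one step you leave implicit: you need $\|\Gamma_1(A)-\Gamma_1(B)\|_\infty>0$, i.e.\ that two finite signed measures agreeing on all lower orthants $I_x$ must coincide (a $\pi$-system/Dynkin argument applied to the Jordan decomposition); the paper needs the same fact to pick its point $x$ with $\Gamma_1(A)(I_x)\neq\Gamma_1(B)(I_x)$, so this is not a defect relative to the paper. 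On balance, the paper's pointwise reduction is more elementary (no uniform empirical-process theory is needed for this theorem), whereas your version isolates a cleaner ``signal grows at rate $n^{1/2-\rho}$, noise is uniformly $O_P(1)$'' picture and would extend more readily to statements about the sup-norm statistic itself diverging in probability.
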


As can be seen from the proof
of Theorem \ref{theorem:AsymptoticIdentifiabilitySlowBeta}, the
measure $L_A(P_e(\beta)^{\otimes p})$ can be written as a polynomial
of degree $p$ in $\beta$, where the constant term corresponds to $F^A$
and the first order term corresponds to $\Gamma_1(A)$, and similarly for
$L_B(P_e(\beta)^{\otimes p})$. In this light, Theorem
\ref{theorem:AsymptoticIdentifiabilitySlowBeta} shows that in the
absence of a difference between the constant terms of
$L_B(P_e(\beta)^{\otimes p})$ and $L_A(P_e(\beta)^{\otimes p})$, having
different first order terms is a sufficient criterion for distinguishing $F^A_{\beta_n}$
and $F^B_{\beta_n}$ at rate $1/\sqrt{n}$.

\section{Numerical experiments}

\label{sec:ICANumerical}

In this section, we carry out numerical experiments related
to the results in Section \ref{sec:Identifiability}. To make our experiments feasible, we
consider the scenario where $p=2$, $\zeta$ is the standard normal
distribution and $\xi$ is the standard exponential distribution. We consider the two matrices
\begin{align}
  A =\left[\begin{array}{cc}
      1 & 0 \\ \alpha & \sqrt{1-\alpha^2}\end{array}\right]
  \quad\textrm{ and }\quad
  B =\left[\begin{array}{cc}
      \sqrt{1-\alpha^2} & \alpha \\ 0 & 1 \end{array}\right],\notag
\end{align}
where we set $\alpha=0.4$. These two matrices are related to Example \ref{example:ICACausal}. Note that $AA^t=BB^t$ while
$A\neq B\Lambda P$ for all diagonal $\Lambda$ with $\Lambda^2=I$ and
all permutation matrices $P$. These properties makes $A$ and $B$ appropriate for evaluating the results of Section \ref{sec:Identifiability}. 
Fix $\beta\in(0,1)$ and let $\eps$ be a two-dimensional random variable with independent marginales and marginal
distributions equal to $P_e(\beta)=\beta\xi+(1-\beta)\NNN$, where
$\NNN$ denotes the standard normal distribution. The benefit of
this setup is that the cumulative distribution
functions $F^A_\beta$ and $F^B_\beta$ of $A\eps$ and $B\eps$ can be calculated in
semi-analytical form, depending only on
elementary functions and cumulative distribution functions for
one-dimensional and two-dimensional normal distributions.

We consider numerical evaluation of the sequence in the left-hand side of (\ref{eq:SimpleAsympNonIdentifyLimit}) and approximation of
its limit for $\beta_n = n^{-\rho}$ with varying $\rho>0$. We use a Monte Carlo approximation to evaluate
the probability. To be concrete, for fixed $n$ and $\beta_n = n^{-\rho}$, we make the approximation
\begin{align}
\label{eq:MCEstimate}
  P( \sqrt{n} \|\FF_{\beta_n}^A - F_{\beta_n}^B \|_\infty > c)  
  &\approx\frac{1}{N}\sum_{k=1}^N 1_{(X_k > c)}
\end{align}
for some fixed $N$, where $(X_k)$ are independent and identically distributed variables with the same distribution as
$\sqrt{n} \|\FF_{\beta_n}^A - F_{\beta_n}^B \|_\infty$. In order to simulate values from $X_k$, we first
simulate $n$ variables with distribution $A\epsilon$, this allows us to calculate the empirical cumulative
distribution function in any point. Due to properties of empirical cumulative distribution functions,
the supremum in $\|\FF_{\beta_n}^A - F_{\beta_n}^B \|_\infty$ can be reduced to a finite one. However, for the purpose
of reducing computational time, we are forced to approximate the finite maximum with a maximum over some fewer
points. This means that our probability estimates in general will be biased downwards. Also because of time
considerations, we restrict ourselves to using $N = 1000$ in (\ref{eq:MCEstimate}).

Coupling the above with the semi-analytical form of the exact cumulative distribution function, we are capable of 
simulating values of $X_k$ and evaluating the Monte Carlo estimate (\ref{eq:MCEstimate}). Our numerical results are
summarized in Figure \ref{fig:supExcesswithn}.

\begin{figure}[htb]
  \begin{center}
    \includegraphics[angle=0, scale=0.33]{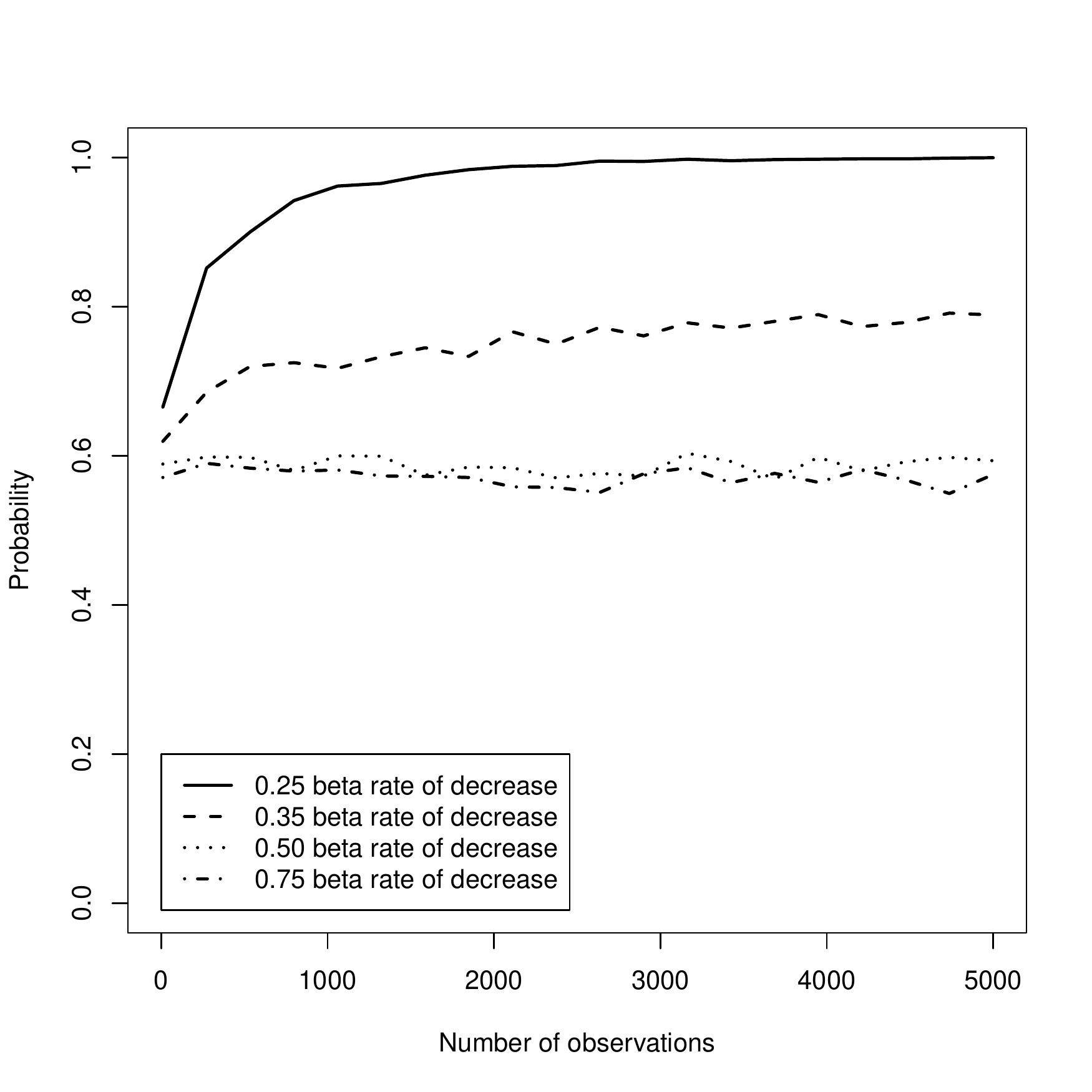}
    \includegraphics[angle=0, scale=0.33]{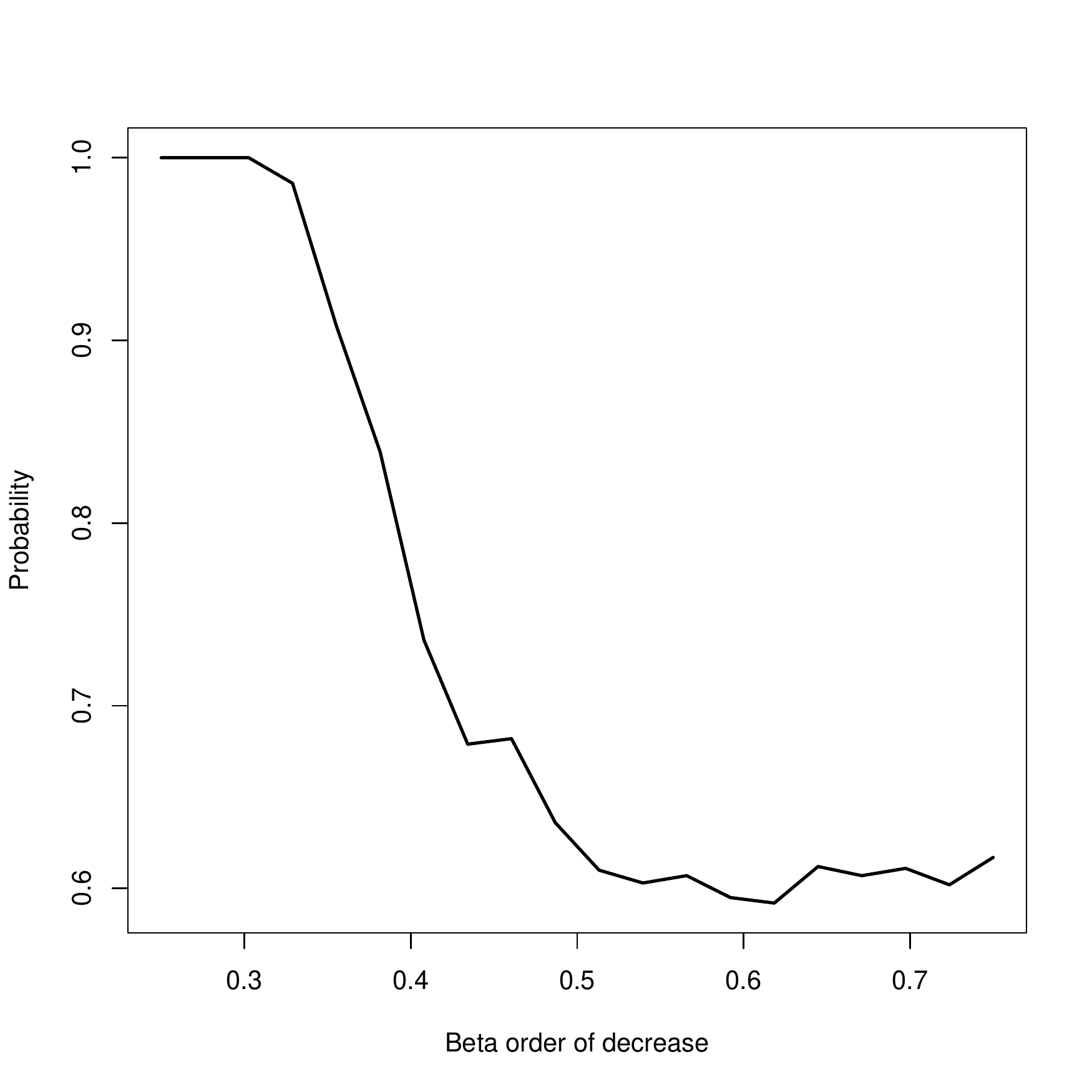}
  \end{center}
\caption{Left: Estimates of $P( \sqrt{n} \|\FF_{\beta_n}^A - F_{\beta_n}^B
  \|_\infty > c)$ for $\beta_n = n^{-\rho}$ with $\rho$ equal to
  $0.25$, $0.35$, $0.50$ and $0.75$, with $n$ varying up to
  $5000$. Right: Estimates of $P( \sqrt{n} \|\FF_{\beta_n}^A - F_{\beta_n}^B
    \|_\infty > c)$ for $\beta_n = n^{-\rho}$ with $\rho$ varying from
    $0.25$ to $0.75$. Here, $n=50000$. In both cases, $c=1$.}
\label{fig:supExcesswithn}
\end{figure}

Theorem \ref{theorem:AsymptoticNonIdentifiability} states that for $\rho>1/2$, the left hand
side of (\ref{eq:MCEstimate}) has a limit less than one. For $\rho$ equal to 0.50 and 0.75, our numerical
results in Figure \ref{fig:supExcesswithn} are in agreement with this.

On the other hand, for $\rho<1/2$, Theorem \ref{theorem:AsymptoticIdentifiabilitySlowBeta} states that the
limit of the left hand side of (\ref{eq:MCEstimate}) equals 1. The results in Figure \ref{fig:supExcesswithn}
indeed confirm this for $\rho=0.25$. For $\rho$ between $0.3$ and $0.5$, however, the
results are less clear. We think this is caused by the fact that the sample size required to see the asymptotic
behavior in Theorem \ref{theorem:AsymptoticIdentifiabilitySlowBeta} increases as $\rho$ tends closer to $1/2$.
To see this, note that for large $n$, we have for some constant $K>0$ that
\begin{align}
    P(\sqrt{n}\|\FF^A_{\beta_n}-F^B_{\beta_n}\|_\infty>c)
  &\approx P(\sqrt{n}\|F^A_{\beta_n}-F^B_{\beta_n}\|_\infty>c)
   \approx 1_{(\sqrt{n}K\beta_n>c)},
\end{align}
since $||\mathbb F_{\beta_n}^A - F_{\beta_n}^B||_{\infty} \approx || F_{\beta_n}^A - F_{\beta_n}^B ||_\infty$ and
$||F_{\beta_n}^A - F_{\beta_n}^B ||_{\infty}$ is approximately linear in $\beta_n$ by Corollary 3.4. Therefore,
$P(\sqrt{n}\|\FF^A_{\beta_n}-F^B_{\beta_n}\|_\infty>c)\approx 1$ if it holds that $\sqrt{n}\beta_n>c/K$, corresponding to
$n > \exp((\log c/K)/(1/2-\rho))$. This indicates that the $n$ required to detect the limiting value grows
exponentially fast in $(1/2-\rho)^{-1}$.

\section{Discussion}

\label{sec:Discussion}

%Non-triviality of results: Uniform convergence of CDFs combined with
%contaminated distributions appears to work well for all parts of the proof.

%Discussion of numerical results.

We studied ICA models for error distributions which
have independent and identically distributed coordinates following contaminated distributions. Our main
theoretical contribution is Theorem \ref{theorem:AsymptoticNonIdentifiability}, which shows that
for $\beta_n = n^{-\rho}$ and $\rho\ge 1/2$, $\FF_{\beta_n}^A$ is asymptotically as close to $F_{\beta_n}^B$ as to
$F_{\beta_n}^A$ in the uniform norm whenever $F^A = F^B$. For contaminated Gaussian distributions, the requirement $F^A = F^B$
corresponds to $AA^t = BB^t$. Our results thus indicate that consistent estimation of $A$ up
to sign reversion and permutation of columns is not possible in this asymptotic scenario. In
particular, causal inference as described in Example \ref{example:ICACausal} (using LiNGAM) would suffer in such scenarios.

The proof of our main theoretical result, Theorem \ref{theorem:AsymptoticNonIdentifiability}, rests on two partial results:
\begin{enumerate}
\item Lemma \ref{lemma:AsymptoticConvWaart}, stating that when $F_n$ is a sequence of cumulative distribution functions
  converging uniformly to $F$, and $\FF_n$ is an empirical process based on $n$ independent
  observations of variables with cumulative distribution function
  $F_n$, then $\sqrt{n}(\FF_n-F_n)$ converges weakly in $\ell_\infty(\RR^p)$.
\item Theorem \ref{theorem:GrossErrorAsymptotics}, which is used to obtain that the convergence of the distribution
  functions $F^A_\beta$ of is asymptotically linear in $\beta$ as $\beta$ tends to zero. This result,
 combined with Lemma \ref{lemma:KNormBound}, allows us to obtain an asymptotic bound on
 $\|F^A_\beta-F^B_\beta\|_\infty$ in Corollary \ref{corr:TotalNormBound}.
\end{enumerate}

In Theorem \ref{theorem:AsymptoticIdentifiabilitySlowBeta}, we also
considered the case of slower rates of decrease in the level of
contamination, namely rates $n^{-\rho}$ for $0<\rho<1/2$. Our results
here indicate that in such asymptotic scenarios, identifiability of
the mixing matrix up to sign reversion and permutation of columns is possible,
subject to some regularity conditions related to the
$\Gamma_1$ signed measures of (\ref{eq:Gamma1Special}).

Our results are asymptotic in nature, considering limiting scenarios for a sequence of noise distributions. Theory on ICA applying such sequences
is not common. One paper using similar methodologies is \cite{YuanSamworth2012}. The authors of that paper are mainly interested
in estimation of $A^{-1}$ using nonparametric maximum likelihood. For this purpose, they introduce
the log-concave ICA projection. In Theorem 5 of their paper, they essentially prove that the set
of log-concave ICA projections for which the corresponding ICA model is identifiable is an open set,
using sequences of ICA models (parametrized through sequences of probability measures). We hope that
the present results as well as those of \cite{YuanSamworth2012} indicate that considering the
properties of ICA for various limiting scenarios of noise distributions may be an area for results
on ICA not yet fully reaped.

Our results also open up new research questions, such as the
following: Is it possible to characterize the matrices $A$ and $B$
such that the regularity condition $\Gamma_1(A)\neq \Gamma_2(B)$ of
Theorem \ref{theorem:AsymptoticIdentifiabilitySlowBeta} holds? Also,
together, Theorem \ref{theorem:AsymptoticNonIdentifiability} and
Theorem \ref{theorem:AsymptoticIdentifiabilitySlowBeta} describe the
behaviour of the empirical process $\FF^A_{\beta_n}$ for asymptotic
scenarios of the form $\beta_n=n^{-\rho}$ for $\rho>0$, in particular
describing the difficulty of using $\FF^A_{\beta_n}$ to distinguish
$F_{\beta_n}^A$ and $F^B_{\beta_n}$. Is it possible to obtain
finite-sample bounds instead of limiting behaviours in these results?
How do Theorem \ref{theorem:AsymptoticNonIdentifiability} and Theorem
\ref{theorem:AsymptoticIdentifiabilitySlowBeta} translate into results
on the ability of practical algorithms such as the fastICA algorithm,
see \cite{AH1999}, to distinguish the correct mixing matrix? Is it
possible to use similar techniques to analyze identifiability of the
mixing matrix in asymptotic scenarios where the number of components $p$ tends to infinity? Do
the present results extend to cases where the coordinates of the error
distributions are not contaminated normal distributions, or when the
coordinates are not identically distributed?

Our results have been motivated by applications in causal inference. Besides linear SEMs with non-Gaussian noise as discussed in Example \ref{example:ICACausal}, there are other
settings where the underlying causal structure is completely
identifiable, such as non-linear SEMs with almost arbitrary additive
noise and linear SEMs with additive Gaussian noise of
equal variance, see e.g. \cite{HJM} and \cite{PB}, respectively. We may also ask
whether one can use similar techniques to those presented here in order to study identifiability in these models when the
structural equations are close to linear or the variance of the errors are close to equal,
respectively. Also, we may consider the asymptotic behaviour of the model when we, instead
of considering perturbations of the error distribution, consider
perturbations of the additive nature of the noise, such that our
SEM is defined by $X = f(X,\eps)$, where $f(x)\approx Ax + \eps$, corresponding to models
with near-additive noise, and we let $f$ tend to a function linear in $\eps$?

In light of these open questions, our present results should be
seen as a small step towards a better understanding of the identifiability
of the mixing matrix for ICA for error distributions which are close
to Gaussian but not Gaussian. We hope that this paper will lead to
more work in this direction.

\appendix
\section{Proofs}

\label{sec:Proofs}

\subsection{Proofs for Section \ref{sec:UpperBound}} \textit{Proof of Theorem \ref{theorem:GrossErrorAsymptotics}.}
First note that we have $P_e(\beta)-\zeta=\beta(\xi-\zeta)$. Taking norms, this
implies $\|P_e(\beta)-\zeta\|_\infty=\beta\|\xi-\zeta\|_\infty$ and
\begin{align}
  \frac{P_e(\beta)-\zeta}{\|P_e(\beta)-\zeta\|_\infty}
  =\frac{\xi-\zeta}{\|\xi-\zeta\|_\infty} = \nu.
\end{align}
We then also have $P_e(\beta) = \zeta + \beta\|\xi-\zeta\|_{\infty} \nu$.

We now analyze $P_e(\beta)^{\otimes p}$. For Borel subsets $C_1,\ldots,C_p$ of $\RR$, we have
\begin{align}
  P_e(\beta)^{\otimes p}(C_1\times\cdots\times C_p)
  &=(\zeta+\beta\|\xi-\zeta\|_\infty\nu)^{\otimes p}(C_1\times\cdots\times C_p)\notag\\
  &=\prod_{k=1}^p (\zeta(C_k)+\beta\|\xi-\zeta\|_\infty\nu(C_k))\notag\\
  &=\sum_{k=0}^p \beta^k\|\xi-\zeta\|_\infty^k \sum_{\alpha\in S_k}\prod_{i=1}^p\zeta(C_i)^{1-\alpha_i}\nu(C_i)^{\alpha_i},
\end{align}
where $S_k =\left\{\alpha\in\{0,1\}^p\left| \sum_{i=1}^p
    \alpha_i=k\right.\right\}$, and the last equality follows
since
\begin{align}  \prod_{k=1}^p(a_k+\gamma b_k)=\sum_{k=0}^p\gamma^k\sum_{\alpha\in
    S_k}\prod_{i=1}^p a_i^{1-\alpha_i}b_i^{\alpha_i}, \qquad \text{for}\, a,b\in\RR^p\,\text{and}\,\gamma\in \RR.
\end{align}
Defining $\mu_0=\zeta$ and $\mu_1=\nu$, we then obtain
\begin{align}
  P_e(\beta)^{\otimes p}(C_1\times\cdots\times C_p) =\sum_{k=0}^p \beta^k\|\xi-\zeta\|_\infty^k \sum_{\alpha\in S_k}(\otimes_{i=1}^p\mu_{\alpha_i})(C_1\times\cdots\times C_p).
\end{align}
Letting $\Gamma_k = \sum_{\alpha\in S_k}L_A(\otimes_{i=1}^p\mu_{\alpha_i})$, this yields
\begin{align}
\label{eq:LAPeBetaGammaExp}
  L_{A}(P_e(\beta)^{\otimes p})=\sum_{k=0}^p \beta^k\|\xi-\zeta\|_\infty^k \Gamma_k.
\end{align}
Next, note that $\Gamma_0 = L_{A}(\zeta^{\otimes p})$, so that
\begin{align}
  \lim_{\beta\to0}
  \frac{L_A(P_e(\beta)^{\otimes p})-L_A(\zeta^{\otimes p})}{\|P_e(\beta)-\zeta\|_\infty}
  &=\lim_{\beta\to0}\sum_{k=1}^p \beta^{k-1}\|\xi-\zeta\|_\infty^{k-1} \Gamma_k\notag\\
  &=\Gamma_1
  =\sum_{k=1}^p L_{A}(\zeta^{\otimes(k-1)}\otimes\nu\otimes\zeta^{\otimes(p-k)}).
\end{align}
In particular, this shows that for any $\eta>0$,
\begin{align}
   \limsup_{\beta\to0}\|F^A_\beta - F^A\|_\infty
  = &\limsup_{\beta\to0}\|L_A(P_e(\beta)^{\otimes p})-L_A(\zeta^{\otimes p})\|_\infty\notag\\
  \le &\limsup_{\beta\to0}(1+\eta)\|\Gamma_1\|_\infty\|P_e(\beta)-\zeta\|_\infty\notag\\  
  \le &\limsup_{\beta\to0}(1+\eta)\|\Gamma_1\|_\infty\beta \|\xi-\zeta\|_\infty=0,
\end{align}
so $F^A_\beta$ converges uniformly to $F^A$ as $\beta$ tends to zero.
\hfill$\Box$

\textit{Proof of Lemma \ref{lemma:OnlyContaminatedProps}.}
Let $\beta\in(0,1)$ and let $\alpha$ be such that $\|Q(\beta)-\zeta\|_\infty=\alpha\beta$. Let
$\xi=Q(1)$. We then have
\begin{align}
  \frac{Q(\beta)-\zeta}{\|Q(\beta)-\zeta\|_\infty}
   =\frac{Q(\beta)-\zeta}{\alpha\beta},\label{eq:TauBeta1}
\end{align}
while
\begin{align}
  \frac{Q(1)-\zeta}{\|Q(1)-\zeta\|_\infty}
   =\frac{\xi-\zeta}{\alpha}.\label{eq:TauBeta2}
\end{align}
By our assumptions, the right-hand sides in (\ref{eq:TauBeta1}) and (\ref{eq:TauBeta2}) are
equal. This implies $Q(\beta)=\beta\xi+(1-\beta)\zeta$.
\hfill$\Box$

To prove Lemma \ref{lemma:KNormBound}, we
first present a lemma relating the uniform norm of certain measures on
$(\RR^p,\BBB_p)$ to the uniform and total variation norms of some measures on $(\RR,\BBB)$.

\begin{lemma}
\label{lemma:ktvkBound}
Let $\mu_1,\ldots,\mu_p$ be signed measures on $(\RR,\BBB)$, and let $A \in \MM(p,p)$. Then for any $i\in \{1,\dots,p\}$, it holds that
\begin{align}
  \|L_A(\mu_1\otimes\cdots\otimes\mu_p)\|_\infty
  \le 2\|\mu_i\|_\infty\prod_{k\neq i}^p\|\mu_k\|_{tv}.
\end{align}
\end{lemma}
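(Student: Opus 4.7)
The plan is to unwind the Kolmogorov norm into a box measure and then apply Fubini for signed product measures, integrating over the coordinate $y_i$ inside. Fix $x\in\RR^p$ and set $C = L_A^{-1}((-\infty,x_1]\times\cdots\times(-\infty,x_p])$, which is the polyhedron $\{y\in\RR^p : \sum_{j=1}^p A_{kj}y_j \le x_k \text{ for }k=1,\ldots,p\}$. By definition of the pushforward,
\begin{align*}
L_A(\mu_1\otimes\cdots\otimes\mu_p)((-\infty,x_1]\times\cdots\times(-\infty,x_p]) = (\mu_1\otimes\cdots\otimes\mu_p)(C),
\end{align*}
so it suffices to bound $|(\mu_1\otimes\cdots\otimes\mu_p)(C)|$ by $2\|\mu_i\|_\infty\prod_{k\neq i}\|\mu_k\|_{tv}$ uniformly in $x$.

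For each fixed $y_{-i}=(y_j)_{j\neq i}$, the slice $C(y_{-i}):=\{y_i\in\RR : (y_1,\ldots,y_p)\in C\}$ is the intersection over $k$ of the sets $\{y_i : A_{ki}y_i \le x_k - \sum_{j\neq i} A_{kj}y_j\}$. Depending on the sign of $A_{ki}$, each factor is empty or $\RR$ (if $A_{ki}=0$), a closed upper half-line $(-\infty,b]$ (if $A_{ki}>0$), or a closed lower half-line $[a,\infty)$ (if $A_{ki}<0$). The intersection is therefore empty, all of $\RR$, a closed half-line of one of the two types, or a closed bounded interval $[a,b]$. I would then verify that for any such $S$, $|\mu_i(S)|\le 2\|\mu_i\|_\infty$: for $S=(-\infty,b]$ this is the definition of the uniform norm, for $S=\RR$ it follows from $\mu_i(\RR)=\lim_{x\to\infty}\mu_i((-\infty,x])$, for $S=[a,b]$ by writing $\mu_i([a,b])=\mu_i((-\infty,b])-\mu_i((-\infty,a))$ and using $\mu_i((-\infty,a))=\lim_{x\uparrow a}\mu_i((-\infty,x])$ by continuity from below of the Jordan parts, and for $S=[a,\infty)$ by the analogous argument.

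Finally, Jordan-decomposing each $\mu_k$ reduces the Fubini step to the positive case and yields
\begin{align*}
(\mu_1\otimes\cdots\otimes\mu_p)(C)=\int_{\RR^{p-1}}\mu_i(C(y_{-i}))\,d\bigl(\otimes_{j\neq i}\mu_j\bigr)(y_{-i}).
\end{align*}
Passing to absolute values inside the integral, using $|\otimes_{j\neq i}\mu_j|=\otimes_{j\neq i}|\mu_j|$ to dominate, applying the uniform bound $|\mu_i(C(y_{-i}))|\le 2\|\mu_i\|_\infty$, and using $|\mu_j|(\RR)=\|\mu_j\|_{tv}$, one obtains the claim after taking the supremum over $x$. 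The main obstacle I foresee is the slice analysis: the uniform norm only directly controls measures of closed lower half-lines $(-\infty,b]$, so the universal factor $2$ has to be extracted by hand via Jordan decomposition and continuity of the positive parts, and one must carefully check that every possible slice is indeed one of the four types listed above.
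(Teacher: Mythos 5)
Your proposal is correct and follows essentially the same route as the paper's proof: Fubini for signed product measures with the $\mu_i$-integration innermost, the observation that each slice of the preimage polyhedron in the $y_i$ direction is an interval, the bound $|\mu_i(S)|\le 2\|\mu_i\|_\infty$ for intervals $S$ obtained by limits of the distribution function, and domination of the outer integrals by total variation. The only cosmetic difference is that the paper first permutes coordinates to reduce to $i=p$, whereas you integrate over $y_i$ directly.
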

\begin{proof}
For any permutation
$\pi:\{1,\ldots,p\}\to\{1,\ldots,p\}$ and corresponding permutation matrix $P$, we have
$L_A(\mu_1\otimes\cdots\otimes\mu_p)=L_{AP^{-1}}(\mu_{\pi(1)}\otimes\cdots\otimes\mu_{\pi(p)})$. Hence,
it suffices to consider the case where $i=p$. Let $x\in\RR^p$ and define 
$I_x=(-\infty,x_1]\times\cdots\times(-\infty,x_p]$. Then Fubini's
theorem for signed measures yields
\begin{align}
  & \left|L_A(\mu_1\otimes\cdots\otimes\mu_p)(I_x)\right|\notag\\
    =&  \left|\int\cdots\int 1_{I_x}(L_A(y))\dv\mu_p(y_p)\cdots\dv\mu_1(y_1)\right|\notag\\
   \le& \int\cdots\int \left|\int 1_{I_x}(L_A(y))\dv\mu_p(y_p)\right|\dv|\mu_{p-1}|(y_{p-1})\cdots\dv|\mu_1|(y_1),\label{eq:KTvBound1}
\end{align}
where we have also used the triangle inequality for integrals with respect
to signed measures, which follows for example from Theorem 6.12 of
\cite{MR924157}. We now analyze the innermost integral of \eqref{eq:KTvBound1}. For fixed $y_1,\ldots,y_{p-1}$, we have
\begin{align}
  &\{y_p\in\RR\mid 1_{I_x}(L_A(y))=1\}\notag\\
   =& \{y_p\in\RR\mid \foralls i\le p: (Ay)_i \le x_i\}\notag\\
   =& \cap_{i=1}^p \{y_p\in\RR\mid a_{ip}y_p\le x_i-(a_{i1}y_1+\cdots+a_{i(p-1)}y_{p-1})\},
\end{align}
where $a_{ij}$ is the $(i,j)$'th entry of $A$. Hence, $\{y_p\in\RR\mid 1_{I_x}(L_A(y))=1\}$ is a finite intersection of
intervals, and is therefore itself an interval. This yields 
\begin{align}\label{eq: yp}
   |\mu_p(\{y_p\in\RR\mid 1_{I_x}(L_A(y))=1\})|\le2\|\mu_p\|_\infty.
\end{align} 
This inequality is immediate when the
interval is of the form $(-\infty,a]$ for some $a\in\RR$. If the interval is of the form $[a,\infty)$, we have
\begin{align}
  |\mu_p([a,\infty))|
  \le& |\mu_p(\RR)|+|\mu_p(-\infty,a)|\notag\\
  =&\lim_{b\to\infty}|\mu_p((-\infty,b])|+|\mu_p((-\infty,a-1/b])|
  \le 2\|\mu_p\|_\infty,
\end{align}
and similarly for other types of intervals, whether bounded or unbounded,
open, half-open or closed. Combining \eqref{eq:KTvBound1} and \eqref{eq: yp} yields
\begin{align}
  & \left|L_A(\mu_1\otimes\cdots\otimes\mu_p)(I_x)\right|\notag\\
   \le& \int\cdots\int 2\|\mu_p\|_\infty\dv|\mu_{p-1}|(y_{p-1})\cdots\dv|\mu_1|(y_1)
  =2\|\mu_p\|_\infty\prod_{k=1}^{p-1}\|\mu_k\|_{tv}.
\end{align}
\end{proof}

\textit{Proof of Lemma \ref{lemma:KNormBound}.}
By Lemma \ref{lemma:ktvkBound}, we have
\begin{align}
  \|L_A(\zeta^{\otimes(k-1)}\otimes\nu\otimes\zeta^{\otimes(p-k)})\|_\infty
  &\le 2\|\nu\|_\infty\|\zeta\|_{tv}^{p-1}=2.
\end{align}
Applying the triangle inequality, we therefore obtain
\begin{align}
  \left\|\sum_{k=1}^p L_{A}\left(\zeta^{\otimes(k-1)}\otimes\nu\otimes\zeta^{\otimes(p-k)}\right)\right\|_\infty
  & \le2p.
\end{align}
\hfill$\Box$

\textit{Proof of Lemma \ref{lemma:GaussianUniqueness}.}
\textbf{Proof of (1)}. With $\zeta$ Gaussian with mean zero and variance $\sigma^2$,
$L_A(\zeta^{\otimes p})$ is Gaussian with mean zero and variance $\sigma^2AA^t$, and so the
result is immediate for this case.

\textbf{Proof of (3)}. Now consider the case
where $\zeta$ is not a symmetric distribution. As $L_P(\zeta^{\otimes
  p})=\zeta^{\otimes p}$ holds for any permutation matrix $P$, we obtain that
if $A=BP$, then $L_A(\zeta^{\otimes p})= L_B(\zeta^{\otimes
  p})$ and so $F^A=F^B$, proving one implication.

Conversely, assume that
$F^A=F^B$, meaning that $L_A(\zeta^{\otimes p})=
L_B(\zeta^{\otimes p})$. As $\zeta$ is nondegenerate and non-Gaussian and $A$
and $B$ are invertible, Theorem 4 of \cite{EK2004}
shows that $A=B\Lambda P$, where $\Lambda\in\MM(p,p)$ is an invertible
diagonal matrix and $P\in\MM(p,p)$ is a permutation matrix. This yields
\begin{align}
  \zeta^{\otimes p}
  =L_{B^{-1}}(L_B(\zeta^{\otimes p}))
  =L_{B^{-1}}(L_A(\zeta^{\otimes p}))
  =L_{\Lambda P}(\zeta^{\otimes p})
  =L_{\Lambda}(\zeta^{\otimes p}).
\end{align}
Now let $Z$ be a random variable with distribution $\zeta$. The above then
yields that for all $i$, $\Lambda_{ii}Z$ and $Z$ have the same
distribution. In particular, $|\Lambda_{ii}||Z|$ and $|Z|$ have the same
distribution, so $P(|Z|\le z/|\Lambda_{ii}|)=P(|Z|\le z)$ for all
$z\in\RR$. As $Z$ is not almost surely zero, there is $z\neq 0$ such that
$P(|Z|\le z-\eps)<P(|Z|\le z+\eps)$ for all $\eps>0$. This yields $|\Lambda_{ii}|=1$. Next,
let $\varphi$ denote the characteristic function of $Z$.  We then have
$\varphi(\Lambda_{ii}\theta)=\varphi(\theta)$ for all $\theta\in\RR$. As
$Z$ is not symmetric, there is a $\theta\in\RR$ such that
$\varphi(\theta)\neq\varphi(-\theta)$. Therefore, $\Lambda_{ii}=-1$ cannot
hold, so we must have $\Lambda_{ii}=1$. We conclude that $\Lambda$ is the identity matrix and
thus $A=BP$, as required.

\textbf{Proof of (2)}. Finally, consider a symmetric probability measure $\zeta$. It is then immediate that
when $\Lambda$ and $P$ are as in the statement of the lemma, it holds that
$L_{\Lambda P}(\zeta^{\otimes p})=\zeta^{\otimes p}$ and thus $F^A=F^B$
whenever $A=B\Lambda P$. The converse implication follows as in the proof
of (3).
\hfill$\Box$

\subsection{Proofs for Section \ref{sec:Identifiability}}

Before proving Theorem \ref{theorem:ClassicalAsymptoticIdentifiability} and
Theorem \ref{theorem:AsymptoticNonIdentifiability}, we show a result on
empirical processes. Recall that for a metric space $(M,d)$, the
$\eps$-covering number $N(\eps,M,d)$ is the minimum number of open balls of
radius $\eps$ which is required to cover $(M,d)$, see, e.g., Section 2.1.1 of \cite{MR1385671}.

%This metric is not the intrinsic metric of the limiting process for the
%empirical processes, rather it is the "intrinsic metric of the
%non-mean-corrected empirical processes".
\begin{lemma}
\label{lemma:TotallyBoundedIntrinsitc}
Fix a cumulative distribution function $F$. Define $\rho:\RR^p\times\RR^p$ by
\begin{align}
  \rho(x,y) = \sqrt{F(x)+F(y)-2F(x\land y)},
\end{align}
and let $I_x=(-\infty,x_1]\times\cdots\times(-\infty,x_p]$. Let $Z$ be
a variable with cumulative distribution function $F$. Then, the following holds:
\begin{enumerate}
\item $\rho$ is a pseudometric.
\item $\rho(x,y) = \sqrt{E(1_{I_x}(Z)-1_{I_y}(Z))^2}$.
\item $(\RR^p,\rho)$ is totally bounded.
\end{enumerate}
\end{lemma}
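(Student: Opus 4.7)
The plan is to prove the three claims in the order (2), (1), (3), because (2) gives a probabilistic interpretation of $\rho$ from which (1) is immediate, and (3) is the substantive part.

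For (2), I would simply expand the square. Using $1_{I_x}(Z)\,1_{I_y}(Z) = 1_{I_x \cap I_y}(Z) = 1_{I_{x\land y}}(Z)$, the calculation
\begin{align}
E(1_{I_x}(Z) - 1_{I_y}(Z))^2 = E\,1_{I_x}(Z) + E\,1_{I_y}(Z) - 2\,E\,1_{I_{x\land y}}(Z) = F(x) + F(y) - 2F(x\land y)
\end{align}
matches the definition of $\rho(x,y)^2$. Item (1) is then an immediate consequence: (2) identifies $\rho(x,y)$ with $\|1_{I_x}(Z) - 1_{I_y}(Z)\|_{L^2(P)}$, so $\rho$ is non-negative, symmetric, and satisfies the triangle inequality because the $L^2$-norm does. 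It is only a pseudometric since $\rho(x,y)=0$ does not force $x=y$ (for instance $\rho(x,y)=0$ whenever $F$ is supported inside $I_{x\land y}$).

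For (3), the key step is to bound $\rho$ by a sum of marginal contributions. Unpacking $I_x\triangle I_y = (I_x\setminus I_y)\cup(I_y\setminus I_x)$ coordinate by coordinate yields the inclusion
\begin{align}
I_x \triangle I_y \subseteq \bigcup_{j=1}^{p}\bigl\{z\in\RR^p : z_j \in (x_j\land y_j,\, x_j\lor y_j]\bigr\},
\end{align}
so by (2) together with a union bound, $\rho(x,y)^2 \leq \sum_{j=1}^p |F_j(x_j) - F_j(y_j)|$, where $F_j$ denotes the marginal CDF of $Z_j$. The map $\phi:\RR^p\to [0,1]^p$ defined by $\phi(x)=(F_1(x_1),\ldots,F_p(x_p))$ then satisfies $\rho(x,y)^2 \leq \|\phi(x)-\phi(y)\|_1$. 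Since $([0,1]^p,\|\cdot\|_1)$ is compact and hence totally bounded, for any $\eps>0$ I would cover it by finitely many $\ell^1$-balls of radius $\eps^2$, and, for each such ball that meets $\phi(\RR^p)$, pick a representative $y_k\in\RR^p$ with $\phi(y_k)$ in that ball. Given any $x\in\RR^p$, the ball containing $\phi(x)$ yields a $y_k$ with $\|\phi(x)-\phi(y_k)\|_1 < \eps^2$, whence $\rho(x,y_k) < \eps$, so $\{y_k\}$ is the desired finite $\eps$-net.

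The main obstacle is (3): $\rho$ is defined on the non-compact space $\RR^p$, yet we need total boundedness. The trick is precisely the embedding $\phi$ into $[0,1]^p$ via the marginal CDFs, which squeezes the tails of $\RR^p$ (where $F_j(x_j)$ is close to $0$ or $1$) into a bounded region where a compactness argument applies; the union-bound inequality is what makes the distortion from $\rho$ to the pull-back of $\ell^1$ go in the right direction.
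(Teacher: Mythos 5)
Your proofs of (1) and (2) coincide with the paper's: expand the square using $1_{I_x}1_{I_y}=1_{I_{x\land y}}$, then read off the pseudometric axioms from the $L^2$-norm identification. For (3), however, you take a genuinely different and more elementary route. The paper identifies $N(\eps,\RR^p,\rho)$ with the covering number of the class of indicators $(1_{I_x})_{x\in\RR^p}$ in $L^2(Q)$ and invokes Vapnik--Chervonenkis theory (Example 2.6.1, Exercise 2.6.9 and Theorem 2.6.7 of van der Vaart and Wellner): lower-left orthants form a VC class of dimension $p+1$ with constant envelope, hence finite (indeed polynomially bounded) uniform covering numbers. You instead bound $\rho(x,y)^2=P(Z\in I_x\triangle I_y)$ by a union bound over coordinates, obtaining $\rho(x,y)^2\le\sum_{j=1}^p\lvert F_j(x_j)-F_j(y_j)\rvert=\|\phi(x)-\phi(y)\|_1$ with $\phi$ the vector of marginal CDFs, and pull back a finite $\ell^1$-net of the compact cube $[0,1]^p$. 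Your inclusion $I_x\triangle I_y\subseteq\bigcup_j\{z: z_j\in(x_j\land y_j,\,x_j\lor y_j]\}$ is correct, and the argument works; the only slip is a harmless factor of two (two points in a common ball of radius $\eps^2$ are within $2\eps^2$ of each other, so take radius $\eps^2/2$). The trade-off: your argument is self-contained and avoids entropy machinery, and in fact yields a covering bound $N(\eps,\RR^p,\rho)=O(\eps^{-2p})$ uniformly in the underlying law, since only the marginals enter. The paper's VC route is heavier here but is not gratuitous in context: the same uniform entropy bound from Theorem 2.6.7 is reused in the proof of Lemma \ref{lemma:AsymptoticConvWaart} to verify the entropy integral condition (2.11.2) of the triangular-array CLT, so establishing (3) via VC theory amortizes the cost.
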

\begin{proof}
First note that
\begin{align}
  \rho(x,y)^2 =& F(x)+F(y)-2F(x\land y)\notag\\
              =& E1_{I_x}(Z)+E1_{I_y}(Z)-2E1_{I_x}(Z)1_{I_y}(Z)\notag\\
              =& E(1_{I_x}(Z)-1_{I_y}(Z))^2,
\end{align}
proving claim $(2)$. It is then immediate that $\rho$ is a pseudometric,
proving claim $(1)$. Next, it holds that
$(\RR^p,\rho)$ is totally bounded if and only if $N(\eps,\RR^p,\rho)$ is finite for
all positive $\eps$. Let $Q$ be the distribution corresponding to the cumulative distribution
function $F$, and let $\LLL^2(\RR^p,\BBB_p,Q)$ be the space of
Borel measurable functions from $\RR^p$ to $\RR$ which are
square-integrable with respect to $Q$. Let $\|\cdot\|_{2,Q}$ denote
the usual seminorm on $\LLL^2(\RR^p,\BBB_p,Q)$. Applying claim (2), it is immediate that
\begin{align}
   N(\eps,\RR^p,\rho)
  = N(\eps,(1_{I_x})_{x\in\RR^p},\|\cdot\|_{2,Q}).
\end{align}
Combining Example 2.6.1 and Exercise 2.6.9 of \cite{MR1385671}, we find that $(1_{I_x})_{x\in\RR^p}$ is a
Vapnik-Cervonenkis (VC) subgraph class with VC dimension
$p+1$. Furthermore, $(1_{I_x})_{x\in\RR^p}$ has envelope function constant and
equal to one. Therefore, Theorem 2.6.7 of \cite{MR1385671} shows that
$N(\eps,(1_{I_x})_{x\in\RR^p},\|\cdot\|_{2,Q})$ and thus $N(\eps,\RR^p,\rho)$
is finite, and so $(\RR^p,\rho)$ is totally bounded.
\end{proof}

%The result from Van der Vaart applied in the proof is essentially a
%Lindeberg central limit theorem for random fields with values in \ell_\infty.
\begin{lemma}
\label{lemma:AsymptoticConvWaart}
Let $(F_n)$ be a sequence of cumulative distribution functions on $\RR^p$,
and let $F$ be a cumulative distribution function on $\RR^p$. Let
$(X_{nk})_{1\le k\le n}$ be a triangular array such that for each $n$,
$X_{n1},\ldots,X_{nn}$ are independent with distribution $F_n$. Let $\FF_n$
be the empirical distribution function of $X_{n1},\ldots,X_{nn}$. If $F_n$ converges uniformly to $F$, then
$\sqrt{n}(\FF_n-F_n)$ converges weakly in $\ell_\infty(\RR^p)$ to an $F$-Gaussian field.
\end{lemma}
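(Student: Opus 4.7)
The plan is to verify weak convergence in $\ell_\infty(\RR^p)$ via the standard two-step criterion: convergence of all finite-dimensional marginals, together with asymptotic equicontinuity with respect to a totally bounded pseudometric. Total boundedness of $(\RR^p,\rho)$, where $\rho$ is the intrinsic pseudometric of the $F$-Gaussian field, is already supplied by Lemma \ref{lemma:TotallyBoundedIntrinsitc}. Throughout I would work with the class $\FFF = \{1_{I_x} : x \in \RR^p\}$ of indicators of lower-left orthants, which, as recalled in the proof of Lemma \ref{lemma:TotallyBoundedIntrinsitc}, is a VC class of index $p+1$ with constant envelope equal to $1$.

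For the finite-dimensional marginals, fix $x_1,\ldots,x_m \in \RR^p$ and consider the triangular array of centered, uniformly bounded i.i.d.\ random vectors
\begin{align}
Y_{nk} = \bigl(1_{I_{x_i}}(X_{nk}) - F_n(x_i)\bigr)_{i=1}^m, \qquad 1\le k\le n.
\end{align}
The covariance matrix of $n^{-1/2}\sum_{k=1}^n Y_{nk}$ has $(i,j)$-entry $F_n(x_i \land x_j) - F_n(x_i)F_n(x_j)$, which tends to $F(x_i \land x_j) - F(x_i)F(x_j)$ because $F_n\to F$ uniformly. Since each coordinate of $Y_{nk}$ is bounded by $1$, the Lindeberg condition is automatic, and the multivariate Lindeberg--Feller CLT gives convergence to a centered Gaussian vector whose covariance matches that of the $F$-Gaussian field evaluated at $(x_1,\ldots,x_m)$.

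For the asymptotic equicontinuity, I would exploit the uniformity of VC entropy bounds: there is a constant $C$ such that
\begin{align}
\sup_Q N\bigl(\eps,\FFF,L_2(Q)\bigr) \le C\eps^{-2(p+1)}
\end{align}
for all $\eps \in (0,1)$, where the supremum is over all probability measures $Q$ on $(\RR^p,\BBB_p)$. Because this bound is independent of $Q$, the associated uniform entropy integral is finite, and the standard maximal inequality for Donsker classes (for example the triangular-array version underlying Theorems 2.5.2 and 2.14.1 of \cite{MR1385671}) yields a function $J$ with $J(\delta)\to 0$ as $\delta\to 0$ such that
\begin{align}
E^* \sup_{\rho_n(x,y)<\delta} \sqrt{n}\, \bigl| (\FF_n - F_n)(x) - (\FF_n - F_n)(y)\bigr| \le J(\delta)
\end{align}
for all $n$, where $\rho_n(x,y)^2 = F_n(x)+F_n(y)-2F_n(x\land y)$ is the $L_2$-pseudometric induced by the law of $X_{nk}$. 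The uniform convergence $F_n\to F$ implies $|\rho_n^2 - \rho^2|\le 4\|F_n-F\|_\infty$ on $\RR^p\times\RR^p$, so eventually every $\rho$-ball of radius $\delta/2$ lies inside the corresponding $\rho_n$-ball of radius $\delta$, transferring the equicontinuity from $\rho_n$ to the target pseudometric $\rho$.

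Combining the finite-dimensional convergence, asymptotic equicontinuity with respect to $\rho$, and total boundedness of $(\RR^p,\rho)$, the general weak convergence criterion in $\ell_\infty$ (e.g.\ Theorem 1.5.7 of \cite{MR1385671}) gives that $\sqrt{n}(\FF_n - F_n)$ converges weakly in $\ell_\infty(\RR^p)$ to a tight mean-zero Gaussian process whose covariance function is that of the $F$-Gaussian field, which identifies the limit. The main obstacle I anticipate is the bookkeeping required to adapt maximal inequalities from the classical i.i.d.\ setting to the triangular-array case and to identify the correct limiting pseudometric; both points are handled by the uniformity of the VC entropy bound over the underlying law.
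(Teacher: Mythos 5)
Your proposal is correct and follows essentially the same route as the paper: the paper verifies the hypotheses of the packaged triangular-array Donsker theorem (Theorem 2.11.1 of \cite{MR1385671}) for the processes $Z_{nk}(x)=1_{I_x}(X_{nk})/\sqrt{n}$, using exactly the same ingredients you list --- total boundedness of $(\RR^p,\rho)$ from Lemma \ref{lemma:TotallyBoundedIntrinsitc}, the uniform VC entropy bound of Theorem 2.6.7 of \cite{MR1385671} for the orthant indicators, uniform convergence of the covariance functions $R_n$ to $R$, and identification of the limit via its finite-dimensional distributions. Your unbundling of that theorem into explicit finite-dimensional convergence plus asymptotic $\rho$-equicontinuity (with the transfer from $\rho_n$ to $\rho$) is just the internal mechanics of the same argument made visible.
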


\begin{proof}
For $x,y\in\RR^p$ and $n\ge1$, let $R_n(x,y)=F_n(x\land y)-F_n(x)F_n(y)$
and also define $R(x,y)=F(x\land y)-F(x)F(y)$. Let $\rho$ be the
pseudometric of Lemma \ref{lemma:TotallyBoundedIntrinsitc} corresponding to
the cumulative distribution function $F$. Let
$Z_{nk}$ be the random field indexed by $\RR^p$ given by $Z_{nk}(x)=1_{I_x}(X_{nk})/\sqrt{n}$,
where we as usual put $I_x=(-\infty,x_1]\times\cdots\times(-\infty,x_p]$. We then have
\begin{align}
  \sum_{k=1}^n Z_{nk}(x)-EZ_{nk}(x)
  =&\frac{1}{\sqrt{n}}\sum_{k=1}^n 1_{I_x}(X_{nk})-F_n(x)\notag\\
  =&\sqrt{n}(\FF_n(x)-F_n(x)).\label{eq:ZToEmpF}
\end{align}
We will apply Theorem 2.11.1 of \cite{MR1385671} to prove that
$\sum_{k=1}^n Z_{nk}-EZ_{nk}$ and thus $\sqrt{n}(\FF_n-F_n)$ converges
weakly in $\ell_\infty(\RR^p)$. We may assume without loss of generality
that all variables are defined on a product probability space as described
in Section 2.11.1 of \cite{MR1385671}, and as the fields $(Z_{nk})$ can be
constructed using only countably many variables, the measurability
requirements in Theorem 2.11.1 of \cite{MR1385671} can be ensured. In order
to apply Theorem 2.11.1 of \cite{MR1385671}, first note that by Lemma
\ref{lemma:TotallyBoundedIntrinsitc}, $(\RR^p,\rho)$ is totally bounded and
so can be applied in Theorem 2.11.1 of \cite{MR1385671}. Also, the covariance function of
$\sum_{k=1}^n Z_{nk}-EZ_{nk}$ is
\begin{align}
  &\cov\left(\sum_{k=1}^n Z_{nk}(x)-EZ_{nk}(x),\sum_{k=1}^n Z_{nk}(y)-EZ_{nk}(y)\right)\notag\\
  =&\sum_{k=1}^n\sum_{i=1}^n EZ_{nk}(x)Z_{ni}(y)-EZ_{nk}(x)EZ_{ni}(y)\notag\\
  =&\frac{1}{n}\sum_{k=1}^n E1_{I_x}(X_{nk})1_{I_y}(X_{nk})-E1_{I_x}(X_{nk})E1_{I_y}(X_{nk})\notag\\
  =&F_n(x\land y)-F_n(x)F_n(y)=R_n(x,y).\label{eq:EmpCovCalc}
\end{align}
Note that
\begin{align}
  & |R(x,y)-R_n(x,y)|\notag\\
  \le& |F(x\land y)-F_n(x\land y)|+|F(x)F(y)-F_n(x)F_n(y)| \notag\\
  \le& |F(x\land y)-F_n(x\land y)|+|F(x)-F_n(x)|+|F_n(y)-F_n(y)|,
\end{align}
so as $F_n$ converges uniformly to $F$, $R_n$ converges uniformly to
$R$. Thus, the covariance functions of $\sum_{k=1}^n Z_{nk}-EZ_{nk}$
converge to $R$. Therefore, in order to apply Theorem 2.11.1 of
\cite{MR1385671}, it only remains to confirm that the conditions of
(2.11.2) in \cite{MR1385671} hold. Fixing $\eta>0$, we have
\begin{align}
  \sum_{k=1}^n E\|Z_{nk}\|_\infty^21_{(\|Z_{nk}\|_\infty>\eta)}
 =& \frac{1}{n}\sum_{k=1}^n E1_{I_x}(X_{nk})1_{(1_{I_x}(X_{nk})>\sqrt{n}\eta)}\notag\\
 \le& P(1_{I_x}(X_{n1})>\sqrt{n}\eta)\notag,
\end{align}
and so it is immediate that the first condition of (2.11.2) in
\cite{MR1385671} holds. Next, define $d_n^2(x,y)=\sum_{k=1}^n
(Z_{nk}(x)-Z_{nk}(y))^2$. We then also have for $x,y\in\RR^p$ that
\begin{align}
  d_n^2(x,y)
  =&\frac{1}{n}\sum_{k=1}^n (1_{I_x}(X_{nk})-1_{I_y}(X_{nk}))^2,\label{eq:dn2Char}
\end{align}
and therefore, $Ed_n(x,y)^2=F_n(x)+F_n(y)-2F_n(x\land y)$. Thus, $(x,y)\mapsto Ed_n(x,y)^2$ converges uniformly to $\rho^2$ on
$\RR^p\times\RR^p$. Therefore, we conclude that for any sequence $(\delta_n)$ of positive
numbers tending to zero, it holds for all $\eta>0$ that
\begin{align}
  \limsup_{n\to\infty}\sup_{x,y:\rho(x,y)\le\delta_n}Ed_n^2(x,y)
  \le& \limsup_{n\to\infty}\sup_{x,y:\rho(x,y)\le\delta_n}\rho(x,y)^2\notag\\
  \le& \limsup_{n\to\infty}\delta_n^2=0.
\end{align}
Hence, the second condition of
(2.11.2) in \cite{MR1385671} holds. In order to verify the final condition of
(2.11.2) in \cite{MR1385671}, first note that $d_n(x,y)^2=E_{\PP_n}(1_{I_x}-1_{I_y})^2$ by (\ref{eq:dn2Char}),
where $E_{\PP_n}$ denotes
integration with respect to $\PP_n$ and $\PP_n$ is the empirical measure
on $(\RR^p,\BBB_p)$ in $X_{n1},\ldots,X_{nn}$. Thus, $d_n(x,y)$ is the
$\LLL^2(\RR^p,\BBB_p,\PP_n)$ distance between the mappings $I_x$ and $I_y$,
and so
\begin{align}
  N(\eps,\RR^p,d_n)
  =N(\eps,(1_{I_x})_{x\in\RR^p},\|\cdot\|_{2,\PP_n})
  \le \sup_Q N(\eps,(1_{I_x})_{x\in\RR^p},\|\cdot\|_{2,Q}),
\end{align}
where $\|\cdot\|_{2,Q}$ denotes the norm on $\LLL^2(\RR^p,\BBB_p,Q)$ and the
supremum is over all probability measures $Q$ on $(\RR^p,\BBB_p)$. Thus,
the third condition of (2.11.2) in \cite{MR1385671} is satisfied if only it
holds that for all sequences $(\delta_n)$ of positive numbers tending to zero,
\begin{align}
\label{eq:ThirdSuffVaart}
  \lim_{n\to\infty}\int_0^{\delta_n} \sup_Q\sqrt{\log N(\eps,(1_{I_x})_{x\in\RR^p},\|\cdot\|_{2,Q})}\dv\eps=0.
\end{align}
However, Theorem 2.6.7 of \cite{MR1385671} yields a
constant $K>0$ such that for $0<\eps<1$,
\begin{align}
  \sup_Q N(\eps,(1_{I_x})_{x\in\RR^p},\|\cdot\|_{2,Q})
  \le K(p+1)(16e)^{p+1}\eps^{-2p}.
\end{align}
As a consequence, again for $0<\eps<1$,
\begin{align}
  \sup_Q \sqrt{\log N(\eps,(1_{I_x})_{x\in\RR^p},\|\cdot\|_{2,Q})}
  \le& \sqrt{\log K(p+1)(16e)^{p+1} - 2p\log \eps}.
\end{align}
By elementary calculations, we obtain for $0<c<d<1$ and $a,b>0$ that
\begin{align}
  &\int_c^d \sqrt{a-b\log x}\dv x\notag\\
  =&\left[ x\sqrt{a-b\log x}-\frac{e^{a/b}\sqrt{\pi b}}{2}\mathrm{erf}\left(\frac{\sqrt{a-b\log x}}{\sqrt{b}}\right)\right]_c^d,
\end{align}
where $\mathrm{erf}$ denotes the error function,
$\mathrm{erf}(x)=(2/\sqrt{\pi})\int_0^x \exp(-y^2)\dv y$. Therefore, we
conclude that for all $0<\eta<1$, the mapping $x\mapsto \sqrt{a-b\log x}$
is integrable over $[0,\eta]$. Thus, (\ref{eq:ThirdSuffVaart})
holds. Recalling (\ref{eq:ZToEmpF}), Theorem 2.11.1 of \cite{MR1385671} now shows that
$\sqrt{n}(\FF_n-F_n)$ converges weakly in $\ell_\infty(\RR^p)$. By
uniqueness of the finite-dimensional distributions of the limit, we find
that the limit is an $F$-Gaussian field.
\end{proof}

\textit{Proof of Theorem
  \ref{theorem:ClassicalAsymptoticIdentifiability}.} By Lemma
\ref{lemma:AsymptoticConvWaart} and the continuous mapping theorem, $\sqrt{n}\|\FF^A_n-F^A\|_\infty$ converges
weakly to $\|W\|_\infty$. Therefore, equation
(\ref{eq:ClassicalIdentifiable}) follows. In order to
prove equation \eqref{eq:ClassicalDistinguish}, consider $A$ and $B$ such
that $F^A\neq F^B$ and let $\|F^A-
F^B\|_{\infty} = \alpha$. Whenever
$\|\FF_{n}^A-F^A\|_\infty\le \alpha/2$, the reverse triangle
inequality yields
\begin{align}
  \|\FF_{n}^A-F^B\|_\infty
  =& \|\FF_{n}^A-F^A-(F^B-F^A)\|_\infty\notag\\
  \ge & |\|\FF_{n}^A-F^A\|_\infty-\|F^B-F^A\|_\infty|\notag\\
    = &|\|\FF_{n}^A-F^A\|_\infty-\alpha|
    \ge\alpha/2.
\end{align}
Since $\lim_{n\to \infty} P( \|\FF_{n}^A - F^A\|_{\infty} \le \alpha/2) =
1$ by Lemma \ref{lemma:AsymptoticConvWaart}, we obtain
\begin{align}
  &\limsup_{n\to\infty} P(  \sqrt{n}\|\FF_{n}^A-F^B\|_\infty \le c )\notag\\
  & \qquad = \limsup_{n\to\infty} P( \|\FF_{n}^A-F^B\|_\infty \le c/\sqrt{n}, \|\FF_{n}^A-F^A\|_\infty \le \alpha/2)\notag\\
  & \qquad \le \limsup_{n\to\infty} P( \|\FF_{n}^A-F^B\|_\infty \le c/\sqrt{n}, \|\FF_{n}^A-F^B\|_\infty \ge \alpha/2)=0.
\end{align}
Hence, $\lim_{n\to\infty} P(\sqrt{n}\| \FF_{n}^A-F^B\|_\infty \le c )=0$
and so (\ref{eq:ClassicalDistinguish}) holds.
\hfill$\Box$

\textit{Proof of Theorem \ref{theorem:AsymptoticNonIdentifiability}.}
By the  triangle inequality, we have the inequalities
\begin{align}
\label{eq:UpperLowerMainIneq}
     & P(\sqrt{n} \|\FF_{\beta_n}^A-F_{\beta_n}^A\|_\infty-\sqrt{n}\|F_{\beta_n}^B-F_{\beta_n}^A\|_\infty> c)\notag\\
  \le& P(\sqrt{n} \|\FF_{\beta_n}^A-F_{\beta_n}^B\|_\infty > c )\notag\\
  \le& P(\sqrt{n} \|\FF_{\beta_n}^A-F_{\beta_n}^A\|_\infty+\sqrt{n}\|F_{\beta_n}^B-F_{\beta_n}^A\|_\infty> c).
\end{align}
Let $\eta>0$. By Corollary \ref{corr:TotalNormBound}, we can choose $N\ge1$ such that for $n\ge N$,
\begin{align}
  \sqrt{n}\|F_{\beta_n}^B-F_{\beta_n}^A\|_\infty\le 4p(1+\eta)\sqrt{n}\beta_n\|\xi-\zeta\|_\infty.
\end{align}
By our assumptions, $\lim_n\sqrt{n}\beta_n=k$. Letting $\gamma>0$, we
then find for $n$ large that
\begin{align}
  \sqrt{n}\|F_{\beta_n}^B-F_{\beta_n}^A\|_\infty
  &\le 4p(1+\eta)(k+\gamma)\|\xi-\zeta\|_\infty.
\end{align}
For such $n$, the first inequality of (\ref{eq:UpperLowerMainIneq}) yields
\begin{align}
\label{eq:LowerLiminfHelper}
  &P(\sqrt{n}\|\FF_{\beta_n}^A-F_{\beta_n}^B\|_\infty > c)\notag\\
  \ge&P(\|\sqrt{n}(\FF_{\beta_n}^A-F_{\beta_n}^A)\|_\infty> c+\sqrt{n}\|F_{\beta_n}^B-F_{\beta_n}^A\|_\infty)\notag\\
  \ge&P(\|\sqrt{n}(\FF_{\beta_n}^A-F_{\beta_n}^A)\|_\infty> c+4p(1+\eta)(k+\gamma)\|\xi-\zeta\|_\infty).
\end{align}
Now recall from Theorem \ref{theorem:GrossErrorAsymptotics} that
$F_{\beta_n}^A$ converges uniformly to $F^A$. Therefore, Lemma
\ref{lemma:AsymptoticConvWaart} and the continuous mapping theorem show
that $\sqrt{n}\|\FF_{\beta_n}^A-F_{\beta_n}^A\|_\infty$ converges weakly to
$\|W\|_\infty$. As a consequence, (\ref{eq:LowerLiminfHelper}) yields
\begin{align}
  &\liminf_{n\to\infty}P(\sqrt{n}\|\FF_{\beta_n}^A-F_{\beta_n}^B\|_\infty > c)\notag\\
  \ge&P(\|W\|_\infty>c+4p(1+\eta)(k+\gamma)\|\xi-\zeta\|_\infty).
\end{align}
Letting $\eta$ and then $\gamma$ tend to zero, we obtain
\begin{align}
  \liminf_{n\to\infty}P(\sqrt{n}\|\FF_{\beta_n}^A-F_{\beta_n}^B\|_\infty > c)
  \ge& P(\|W\|_\infty>c+4p k\|\xi-\zeta\|_\infty).
\end{align}
Similarly, the second inequality of (\ref{eq:UpperLowerMainIneq}) yields
\begin{align}
\label{eq:UpperLimsupHelper}
  &P(\sqrt{n}\|\FF_{\beta_n}^A-F_{\beta_n}^B\|_\infty > c)\notag\\
  \le&P(\|\sqrt{n}(\FF_{\beta_n}^A-F_{\beta_n}^A)\|_\infty> c-\sqrt{n}\|F_{\beta_n}^B-F_{\beta_n}^A\|_\infty)\notag\\
  \le&P(\|\sqrt{n}(\FF_{\beta_n}^A-F_{\beta_n}^A)\|_\infty\ge c-4p(1+\eta)(k+\gamma)\|\xi-\zeta\|_\infty),
\end{align}
and by similar arguments as previously, we obtain
\begin{align}
  \limsup_{n\to\infty}P(\sqrt{n}\|\FF_{\beta_n}^A-F_{\beta_n}^B\|_\infty > c)
  \le& P(\|W\|_\infty\ge c-4p k\|\xi-\zeta\|_\infty).
\end{align}
Combining our results, we obtain (\ref{eq:NonstandardNoIdentify}).
\hfill$\Box$

\textit{Proof of Corollary
  \ref{coro:GaussianAsymptoticNonIdentifiability}.} As we have assumed that
$P_e(\beta_n)$ is non-Gaussian, it follows
from Lemma \ref{lemma:GaussianUniqueness} that $F^A_\beta\neq F^B_\beta$,
since $A\neq B\Lambda P$ for all diagonal $\Lambda$ with $\Lambda^2=I$ and
all permutation matrices $P$. This shows $(1)$. And as $AA^t=BB^t$ and
$\zeta$ is Gaussian, Lemma \ref{lemma:GaussianUniqueness} yields $F^A=F^B$,
so Theorem \ref{theorem:AsymptoticNonIdentifiability} yields $(2)$.
\hfill$\Box$

\textit{Proof of Theorem
  \ref{theorem:AsymptoticIdentifiabilitySlowBeta}.} Note that for any
$x\in\RR^p$, we have
\begin{align}
\label{eq:OnePointBound}
      & P( \sqrt{n} \|\FF_{\beta_n}^A - F_{\beta_n}^B \|_\infty > c)\notag\\
  \ge & P( \sqrt{n} |\FF_{\beta_n}^A(x) - F_{\beta_n}^B(x) | > c)\notag\\
  =   & P(  |\sqrt{n}(\FF_{\beta_n}^A(x)-F^A_{\beta_n}(x)) + \sqrt{n}(F^A_{\beta_n}(x)-F_{\beta_n}^B(x)) | > c).
\end{align}
We first consider the case $F^A\neq F^B$. Let $x\in\RR^p$ be such that
$F^A(x)\neq F^B(x)$. Then $\lim_n
F^A_{\beta_n}(x)-F_{\beta_n}^B(x)\neq0$, so
$|\sqrt{n}(F^A_{\beta_n}(x)-F_{\beta_n}^B(x)|$ tends to infinity as
$n$ tends to infinity. By the central limit theorem,
$\sqrt{n}(\FF_{\beta_n}^A(x)-F^A_{\beta_n}(x))$ converges in
distribution. Therefore, (\ref{eq:OnePointBound}) yields the result.

Next, consider the case $F^A=F^B$ and
$\Gamma_1(A)\neq\Gamma_1(B)$. Let $x\in\RR^p$ be such that
$\Gamma_1(A)(I_x)\neq\Gamma_1(B)(I_x)$. Similarly to the proof of Theorem
\ref{theorem:GrossErrorAsymptotics}, define $\mu_0=\zeta$,
$\mu_1=(\xi-\zeta)/\|\xi-\zeta\|_\infty$, $S_k
=\{\alpha\in\{0,1\}^p\left| \sum_{i=1}^p \alpha_i=k\right.\}$ and also
$\Gamma_k(A) = \sum_{\alpha\in
  S_k}L_A(\otimes_{i=1}^p\mu_{\alpha_i})$. Note that $\Gamma_k(A)$
with $k=1$ corresponds to (\ref{eq:Gamma1Special}). Then, we have
\begin{align}
  L_A(P_e(\beta)^{\otimes p})
  =&\sum_{k=0}^p \beta^k \|\xi-\zeta\|_\infty^k \Gamma_k(A),
\end{align}
see (\ref{eq:LAPeBetaGammaExp}). In particular, we obtain
\begin{align}
  F^A_\beta(x) - F^B_\beta(x)
  =&L_A(P_e(\beta)^{\otimes p})(I_x)
   -L_B(P_e(\beta)^{\otimes p})(I_x)\notag\\
  =&\sum_{k=1}^p \beta^k\|\xi-\zeta\|_\infty^k(\Gamma_k(A)(I_x)-\Gamma_k(B)(I_x)),
\end{align}
where we have used that $\Gamma_0(A)=\Gamma_0(B)$, since
$F^A=F^B$. Since $\beta_n=n^{-\rho}$, we obtain
\begin{align}
  \sqrt{n}(F^A_{\beta_n}(x)-F_{\beta_n}^B(x))
  =&\sum_{k=1}^p n^{1/2-k\rho}\|\xi-\zeta\|_\infty^k(\Gamma_k(A)(I_x)-\Gamma_k(B)(I_x)).\label{eq:ScalednRho}
\end{align}
As $\rho<1/2$, $1/2-\rho>0$. As
$\|\xi-\zeta\|_\infty(\Gamma_1(A)(I_x)-\Gamma_1(B)(I_x))\neq0$, we conclude
that as $n$ tends to infinity, the term corresponding to $k=1$ in the
above tends to infinity in absolute value. Since the right hand side 
of (\ref{eq:ScalednRho}) is a sum with finitely many terms, where the
remaining terms are of lower degree in $n$, we conclude that $|\sqrt{n}(F^A_{\beta_n}(x)-F_{\beta_n}^B(x))|$ tends to infinity as
$n$ tends to infinity. As in the previous case, since the ordinary
central limit theorem shows that
$\sqrt{n}(\FF_{\beta_n}^A(x)-F^A_{\beta_n}(x))$ converges in
distribution, (\ref{eq:OnePointBound}) yields the result.
\hfill$\Box$

\section*{Acknowledgements}

We thank the anonymous reviewer and the editor for their comments and
suggestions, which led to considerable improvements of the paper.

\bibliographystyle{amsplain}

\bibliography{full}

\providecommand{\bysame}{\leavevmode\hbox to3em{\hrulefill}\thinspace}
\providecommand{\MR}{\relax\ifhmode\unskip\space\fi MR }
% \MRhref is called by the amsart/book/proc definition of \MR.
\providecommand{\MRhref}[2]{%
  \href{http://www.ams.org/mathscinet-getitem?mr=#1}{#2}
}
\providecommand{\href}[2]{#2}
\begin{thebibliography}{10}

\bibitem{AC1997}
S.-I. Amari and J.-F. Cardoso, \emph{Blind source separation -- semiparametric
  statistical approach}, IEEE Transactions on Signal Processing \textbf{45}
  (1997), no.~11, 2692--2700.

\bibitem{MSB2002}
M.~S. Bartlett, J.~R. Movellan, and T.~J. Sejnowski, \emph{Face recognition by
  independent component analysis}, IEEE Transactions on Neural Networks
  \textbf{13} (2002), no.~6, 1450--1464.

\bibitem{CFB2004}
C.~F. Beckmann and S.~M. Smith, \emph{Probabilistic independent component
  analysis for functional magnetic resonance imaging}, IEEE Transactions on
  Medical Imaging \textbf{23} (2004), no.~2, 137--152.

\bibitem{MR2329469}
A.~Chen and P.~J. Bickel, \emph{Efficient independent component analysis}, Ann.
  Statist. \textbf{34} (2006), no.~6, 2825--2855.

\bibitem{PC1994}
P.~Comon, \emph{Independent component analysis, a new concept?}, Signal
  Processing \textbf{36} (1994), 287--314.

\bibitem{CJ2010}
Pierre Comon and Christian Jutten, \emph{Handbook of blind source separation:
  Independent component analysis and applications}, Elsevier, Oxford, 2010.

\bibitem{MR936377}
M.~Cs{\"o}rg{\H{o}} and L.~Horv{\'a}th, \emph{A note on strong approximations
  of multivariate empirical processes}, Stochastic Process. Appl. \textbf{28}
  (1988), no.~1, 101--109.

\bibitem{MR0185641}
R.~M. Dudley, \emph{Weak convergences of probabilities on nonseparable metric
  spaces and empirical measures on {E}uclidean spaces}, Illinois J. Math.
  \textbf{10} (1966), 109--126.

\bibitem{EK2004}
J.~Eriksson and V.~Koivunen, \emph{Identifiability, separability and uniqueness
  of linear {ICA} models}, IEEE Signal Processing Letters \textbf{11} (2004),
  no.~7, 601--604.

\bibitem{HJM}
P.~O. Hoyer, D.~Janzing, J.~M. Mooij, J.~Peters, and B.~Sch{\"o}lkopf,
  \emph{Nonlinear causal discovery with additive noise models}, Advances in
  Neural Information Processing Systems 21 (NIPS), MIT Press, 2009,
  pp.~689--696.

\bibitem{AH1999}
A~Hyv{\"a}rinen, \emph{Fast and robust fixed-point algorithms for independent
  component analysis}, IEEE Transactions on Neural Networks \textbf{10} (1999),
  no.~3, 626--634.

\bibitem{AH2013}
\bysame, \emph{Independent component analysis: Recent advances}, Phil. Trans.
  Roy. Soc. Ser. A \textbf{371} (2013), 1--19.

\bibitem{HKO2001}
A.~Hyv{\"a}rinen, J.~Karhunen, and E.~Oja, \emph{Independent component
  analysis}, Wiley-Blackwell, New York, 2001.

\bibitem{MR2906874}
P.~Ilmonen and D.~Paindaveine, \emph{Semiparametrically efficient inference
  based on signed ranks in symmetric independent component models}, Ann.
  Statist. \textbf{39} (2011), no.~5, 2448--2476.

\bibitem{JEA2001}
T.~P. Jung, S.~Makeig, M.~J. McKeown, A.~J. Bell, T.-W. Lee, and T.~J.
  Sejnowski, \emph{Imaging brain dynamics using independent component
  analysis}, Proceedings of the IEEE \textbf{89} (2001), no.~7, 1107--1122.

\bibitem{MR1914748}
D.~Khoshnevisan, \emph{Multiparameter processes}, Springer Monographs in
  Mathematics, Springer-Verlag, New York, 2002.

\bibitem{MR1472736}
M.~A. Lifshits, \emph{Gaussian random functions}, Mathematics and its
  Applications, vol. 322, Kluwer Academic Publishers, Dordrecht, 1995.

\bibitem{NA2008}
M.~Novey and T.~Adah, \emph{Complex {ICA} by negentropy maximization}, IEEE
  Transactions on Neural Networks \textbf{19} (2008), no.~4, 596--609.

\bibitem{OKK2008}
E.~Ollila, H.-J. Kim, and V.~Koivunen, \emph{Compact {C}ram{\'e}r-{R}ao bound
  expression for independent component analysis}, IEEE Transactions on Signal
  Processing \textbf{56} (2008), no.~4, 1421--1428.

\bibitem{PB}
J.~Peters and P.~B{\"u}hlmann, \emph{Identifiability of {G}aussian structural
  equation models with same error variances}, Biometrika, to appear (2012),
  1--11.

\bibitem{MR924157}
W.~Rudin, \emph{Real and complex analysis}, third ed., McGraw-Hill Book Co.,
  New York, 1987.

\bibitem{YuanSamworth2012}
R.~J. Samworth and M.~Yuan, \emph{Independent component analysis via
  nonparametric maximum likelihood estimation}, Ann. Statist. \textbf{40}
  (2012), 2973--3002.

\bibitem{MR2274431}
S.~Shimizu, P.~O. Hoyer, A.~Hyv{\"a}rinen, and A.~Kerminen, \emph{A linear
  non-{G}aussian acyclic model for causal discovery}, J. Mach. Learn. Res.
  \textbf{7} (2006), 2003--2030.

\bibitem{MR2804599}
S.~Shimizu, T.~Inazumi, Y.~Sogawa, A.~Hyv{\"a}rinen, Y.~Kawahara, T.~Washio,
  P.~O. Hoyer, and K.~Bollen, \emph{Direct{L}i{NGAM}: a direct method for
  learning a linear non-{G}aussian structural equation model}, J. Mach. Learn.
  Res. \textbf{12} (2011), 1225--1248.

\bibitem{MR1385671}
A.~W. van~der Vaart and J.~A. Wellner, \emph{Weak convergence and empirical
  processes}, Springer Series in Statistics, Springer-Verlag, New York, 1996.

\bibitem{VSJHO2000}
R.~Vigario, J.~S{\"a}rel{\"a}, V.~Jousm{\"a}ki, M.~H{\"a}m{\"a}l{\"a}inen, and
  E.~Oja, \emph{Independent component approach to the analysis of {EEG} and
  {MEG} recordings}, IEEE Transactions on Biomedical Engineering \textbf{47}
  (2000), no.~5, 589--593.

\bibitem{Yuan2011}
M.~Yuan, \emph{On the identifiability of additive index models}, Stat. Sinica
  \textbf{21} (2011), 1901--1911.

\end{thebibliography}

\end{document}